\newtheorem{theorem}{Theorem}[section]
\newtheorem{proposition}[theorem]{Proposition}
\newtheorem{lemma}[theorem]{Lemma}
\theoremstyle{definition}
\newtheorem{definition}[theorem]{Definition}
\newtheorem{remark}[theorem]{Remark}
\DeclareMathOperator*{\argmin}{arg\,min}
\newcommand{\norm}[1]{\left\lVert#1\right\rVert}
\newcommand{\R}{\mathbb{R}}
\newcommand{\N}{\mathbb{N}}
\newcommand{\dx}{\,\mathrm{d}x}
\newcommand{\dom}{\,\mathrm{d}\omega}
\newcommand{\dy}{\,\mathrm{d}y}
\newcommand{\NN}{\mathcal{N}}
\newcommand{\LL}{\mathcal{L}}
\newcommand{\om}{\omega}
\numberwithin{equation}{section}
\def\ocirc#1{\ifmmode\setbox0=\hbox{$#1$}\dimen0=\ht0
    \advance\dimen0 by1pt\rlap{\hbox to\wd0{\hss\raise\dimen0
    \hbox{\hskip.2em$\scriptscriptstyle\circ$}\hss}}#1\else
    {\accent"17 #1}\fi}
\begin{document}

\title{Convergence analysis of unsupervised Legendre-Galerkin neural networks for linear second-order elliptic PDEs}

\author{
Seungchan Ko\thanks{Department of Mathematics, Sungkyunkwan University, Suwon, Republic of Korea. Email: \tt{ksm0385@skku.edu}},
~Seok-Bae Yun\thanks{Department of Mathematics, Sungkyunkwan University, Suwon, Republic of Korea. Email: \tt{sbyun01@skku.edu}},
~and
~Youngjoon Hong\thanks{Department of Mathematics, Sungkyunkwan University, Suwon, Republic of Korea. Email: \tt{hongyj@g.skku.edu}}
}
\date{\today}

\maketitle

~\vspace{-1.5cm}

\begin{abstract}
In this paper, we perform the convergence analysis of unsupervised Legendre--Galerkin neural networks (ULGNet), a deep-learning-based numerical method for solving partial differential equations (PDEs). 
Unlike existing deep learning-based numerical methods for PDEs, the ULGNet expresses the solution as a spectral expansion with respect to the Legendre basis and predicts the coefficients with deep neural networks by solving a variational residual minimization problem. Since the corresponding loss function is equivalent to the residual induced by the linear algebraic system depending on the choice of basis functions, we prove that the minimizer of the discrete loss function converges to the weak solution of the PDEs. Numerical evidence will also be provided to support the theoretical result. Key technical tools include the variant of the universal approximation theorem for bounded neural networks, the analysis of the stiffness and mass matrices, and the uniform law of large numbers in terms of the Rademacher complexity.
\end{abstract}

\noindent{\textbf{Keywords:} Deep neural network, unsupervised learning, elliptic partial differential equations, Legendre-Galerkin approximation, spectral element method, Rademacher complexity}

\smallskip

\noindent{\textbf{AMS Classification:} 68T07, 65N35, 65K10, 65N12, 35J25}

\section{Introduction}
In recent years, modern machine learning techniques using deep neural networks have had tremendous success in various areas from science and engineering: computer vision \cite{comv}, natural language processes \cite{DLN}, time series analysis \cite{TSA}, cognitive science \cite{CSS}, and so on. 
The notable advance of deep learning algorithms has offered interesting possibilities for computational innovations. 
In particular, based on the rich expressiveness of neural networks, it has recently begun to gain more attention to solving partial differential equations (PDEs) using a deep learning algorithm.

PDEs are fundamental mathematical models in studying a wide range of phenomena naturally arising in science and engineering. Numerical approximation of PDEs has a long and rich history, and the mathematical theory of the numerical methods is well-established; see, for example, \cite{BSFEM, FEM_Intro} for the finite element methods and \cite{spectral_method} where the spectral method was studied. On the other hand, in order to tackle the problems of high-dimensional PDEs, various deep learning-based techniques solving PDEs were developed such as Deep Ritz Method (DRM) \cite{DRM}, Deep Galerkin Methods (DGM) \cite{DGM} and Physics-Informed Neural Network (PINN) \cite{PINN, VPINN}. 
By utilizing the approximation properties of deep neural networks and statistical learning theory to numerical PDEs, these new approaches opened a new area
of research which is recently called scientific machine learning. 
Since neural networks have a great capability of approximating nonlinear functions, approximated solutions with parameters which are updated through machine learning-based optimization techniques have shown notable performance in solving PDEs numerically. 
Those methods introduced certain types of residuals to define a loss function induced from the given PDEs, which is minimized by an optimization algorithm over the randomly-selected training points on the domain. 
In particular, the PINN exploits randomly-chosen collocation points as training data in the space-time domain, and hence, the PINN is available for time-dependent high-dimensional PDEs, on the computational domains of complex geometry \cite{PINN_1, PINN_2, PINN_3, PINN_4, PINN_5, PINN_6}.

However, these approaches have some drawbacks. For example, only a single instance from a fixed input data, including initial conditions, boundary conditions, and external force terms is available at a time. This means that if the input data is changed, the whole training process needs to be repeated. 
Hence, real-time predictions for varying input data are not possible for the aforementioned methods. 
Some techniques have been developed to overcome this problem; data-supervised or data-driven methods (DDM) \cite{CHKP2020, multi_1, multi_2}, the Fourier neural operator (FNO) \cite{FNO}, and the DeepONet (DON) \cite{DON}. 
However, training these models requires a database, which should be provided in advance by solving the equations analytically or numerically. To generate a reliable training dataset, a large number of numerical solutions need to be provided by massive numerical computations, which is inefficient and often takes a very long time.
Furthermore, using the class of neural networks as the solution space makes it difficult to impose boundary values. In general, for a function represented by neural networks, it is not straightforward how to incorporate certain values on the boundary of the given domain. One common strategy to deal with this issue is to add penalty terms to loss functions. However, it is known that the boundary penalty term might cause worse convergence of the approximate solution and some technical difficulties may arise.

To address the aforementioned issues, various attempts have been made in recent years. In particular, in \cite{CKH2022} the unsupervised Legendre--Galerkin neural network (ULGNet) was proposed to solve various PDEs without a training dataset. 
This approach is based on the classical numerical method, the Legendre--Galerkin spectral element method for the boundary value problem \cite{spectral_2, spectral_method}. 
By choosing the Legendre polynomials as basis functions, the corresponding polynomial coefficients are generated via deep neural networks whose input is data of PDEs such as external forcing terms or initial conditions.
Then the predicted solution of the ULGNet can be obtained by a linear combination of the coefficients and the Legendre basis functions. 
For the training, the loss function is defined as a residual of summation of weak formulations which take their test function as Legendre basis functions. 
Then the random points are sampled from the parametric domain which determines the source term (input of the neural network) and the neural network is trained in the direction of minimizing the loss function. 
Consequently, the ULGNet can overcome the shortcomings of various models in scientific machine learning. 
Since it is based on unsupervised learning process, the expensive training database is not required while the algorithm is able to predict multiple instances of PDEs.
In addition, the predicted solution can have the exact boundary values as given in accordance with the choice of spatial basis functions. 
Using a weak form as the loss function instead of the strong form used in other literature (e.g. PINN \cite{PINN} or DGM \cite{DGM}) is advantageous in some aspects, since we can avoid
high-order numerical derivatives that cause large numerical errors, which may be able to enhance the performance of neural networks. 
Moreover, using the Legendre basis functions can also improve the accuracy of the approximation because the numerical integration that appeared in the ULGNet algorithm can be computed by the Gauss–Legendre quadrature rule which is highly accurate. 
From the numerical experiments performed in \cite{CKH2022}, it was shown that the ULGNet can compute the approximate solutions with high accuracy in various situations. 
Especially, this technique performs very well for the singularly-perturbed and boundary layer problems, which is mainly due to the fact that we have freedom in manipulating the basis functions, and hence we are able to design an enriched scheme in this framework. 
The more detailed discussion of ULGNet will be studied in Section \ref{prelim}. 

Despite a wide range of successful applications for neural network-based PDE solvers, the rigorous analysis of such methods is not well developed and only a few theoretical results are currently known. 

Some papers provided evidence of convergence for the neural network-based method \cite{DGM, misc_1}. Note, however, that these papers only proved that there exists a sequence of neural networks which converges to the true solution, rather than showing the actual convergence of approximate solutions. Some other previous research papers on the analysis of neural network-based algorithms solving PDEs investigated on the representation aspect. 
In other words, they focused on whether a solution of the given PDE is well-approximated by neural networks with quantitative control of numerical errors; see, for example, \cite{approx_1, approx_2, boundp, rade_upper_3}. 
On the other hand, if we fix an ansatz space, the generalization error can be estimated by analyzing complexities using covering numbers \cite{gen_err_1, rade_upper_3}. 
More recently, the analysis for the DRM in a high-dimensional space has been established; see \cite{rade_upper_3} where the class of two-layer neural networks in the spectral Barron framework was studied and a more general setting was considered in \cite{bdd_para}. Moreover, some recent papers investigated the generalization error analysis of the PINN \cite{PINN_anal_1, PINN_anal_2, PINN_anal_3} based on the residual minimization problem \cite{res_min_1, SZK2020}. 
More precisely, \cite{PINN_anal_1} focused on the consistency of the loss function so that the approximate solution converges to the exact solution as the number of training samples increases with the assumption of vanishing optimization error. The authors in \cite{PINN_anal_2} conducted an a-posteriori error analysis and showed that the generalization error can be bounded by the training error and the quadrature error under some assumptions of stability of the given PDEs. 
The work in \cite{PINN_anal_3} proved a priori generalization error estimates for a class of second-order linear PDEs by assuming that the true solutions of PDEs are contained in a Barron-type space, and \cite{SZK2020} obtained both a priori and a posterior error estimates for residual minimization problems in Sobolev spaces setting. 
More results on the theoretical analysis of neural network-based methods of PDEs can be found in \cite{misc_1, misc_2, misc_3, rade_upper_4}.

In this perspective, the central aim of the present paper is to perform the theoretical analysis on the convergence of ULGNet, which supports the experimental results in \cite{CKH2022}. Note that all the aforementioned convergence results were about the neural network for the single instance prediction. To the best of our knowledge, this is the first paper which studies convergence analysis for the unsupervised neural network for the prediction of multiple instances.
More precisely, we consider the following second-order, self-adjoint linear elliptic PDE: for some $\nu\geq0$,
\begin{equation}\label{intro_eq_main}
    -\Delta u+\nu u = f\quad{\rm{in}}\,\,I^d=[-1,1]^d,
\end{equation}
with either Dirichlet or Neumann boundary conditions. As mentioned earlier, in the ULGNet setting, the external forcing term $f$ is parametrized by random samples $\om$ from a parametric domain which we shall denote by $\Omega$, and hence the solution $u$ also depends on the parametric variable $\om\in\Omega$. Therefore, a proper distance to be measured is the Bochner-type norm where the distance between the true solution and the approximate solution is integrated over both $I^d$ and $\Omega$. The mathematical setting and detailed problem formulation will be discussed in a later section. 

In general, for the neural network-based methods, the theoretical analysis heavily depends on the form of given PDEs and the type of boundary conditions. For example, the authors of \cite{rade_upper_3} established the convergence of the DRM of the equation \eqref{intro_eq_main} for both $\nu=0$ and $\nu\neq0$, and the analysis of the case $\nu\neq0$ was much more complicated than the one for the case $\nu=0$. Also, by referring \cite{rade_upper_3, bdd_para}, we note that the analysis with the Dirichlet boundary condition is quite different from the analysis for the Neumann condition. As it will be made clear in the later analysis, we will develop a unified approach that can cover various settings with similar analysis, such as the analysis for general $\nu\geq0$ and either Dirichlet or Neumann boundary conditions.

The rest of the present paper is organized as follows. In Section \ref{prelim} we will introduce some preliminaries that will be used throughout the paper, including notations, a detailed description of the ULGNet, our ansatz class, and the problem setting under consideration. In Section \ref{aca} we shall perform the convergence analysis of the residual minimization problem under the abstract setting and this analysis will be applied to concrete PDE problems in Section \ref{PDE_app}. In Section \ref{num_exp}, we provide some numerical experiments which confirm the theoretical result and a concluding remark will be discussed in \ref{con_rmk}.

\section{Preliminaries and mathematical setting}\label{prelim}
In this section, we introduce some preliminaries related to the numerical approximation of the given problem \eqref{main_eq}, which will be used throughout the paper. 
For $1\leq p\leq\infty$ and $k\in\N$, we denote by $L^p(\Omega)$ the Lebesgue space and by $W^{k,p}(\Omega)$ the standard Sobolev space, with the abbreviation $H^k(\Omega)=W^{k,2}(\Omega)$, and we write as $H^1_0(\Omega)$ the subspace of $H^1(\Omega)$ consisting of the functions with zero trace. For a real Banach space $X$, we shall denote by $C(\Omega;X)$ the subspace of the Bochner space $L^{\infty}(\Omega;X)$, which consists of the continuous functions from $\Omega$ to $X$. Moreover, $(\Omega,\mathcal{G},\mathbb{P})$ signifies a probability space, where $\Omega$ is the sample space consisting of all possible outcomes, $\mathcal{G}\subset2^{\Omega}$ is the $\sigma$-algebra, and $\mathbb{P}$ is a probability measure.

For the sake of simplicity, we write $\|\cdot\|_p=\|\cdot\|_{L^p(\Omega)}$ and $\|\cdot\|_{k,p}=\|\cdot\|_{W^{k,p}(\Omega)}$, and furthermore, $|\cdot|$ denotes the Euclidean norm. Throughout the paper, for two vectors $\boldsymbol{a}$ and $\boldsymbol{b}$, $\boldsymbol{a}\cdot \boldsymbol{b}$ means their scalar product. 
Also, for any Lebesgue measurable set $Q\subset\R^d$, $|Q|$ stands for the standard Lebesgue measure of $Q$, and $C$ denotes a generic positive constant, which may differ at each appearance. 
Finally, for two different quantities $A$ and $B$, the notation $A\lesssim B$ means that there exists a positive constant $C>0$ such that $A\leq CB$. 

In order to simplify the presentation, we shall restrict ourselves to the following one-dimensional problem: for some $\nu\geq0$,
\begin{equation}\label{main_eq}
    -u''+\nu u=f\quad{\rm{in}}\,\,I=(-1,1),
\end{equation}
with either the homogeneous Dirichlet boundary condition $u(-1)=u(1)=0$ or the Neumann boundary condition $u'(-1)=u'(1)=0$. 
As will be discussed in Section \ref{sec:multi_dim}, it can be extended to the multi-dimensional problem with general boundary conditions.  

\subsection{Legendre--Galerkin neural network}\label{ULGNet_intro}
In this section, we shall introduce one of the spectral element methods for solving a two-point boundary value problem, the so-called {\textit{Legendre--Galerkin method}}. 
Based on this numerical method, we describe the unsupervised Legendre--Galerkin neural networks (ULGNet) proposed in \cite{CKH2022}, which we aim to analyze in the present paper. 

To begin, we discuss some basic properties of Legendre polynomials that play a role in basis functions for our numerical approximation. 
Legendre polynomials, $L_n(x)$, are mutually orthogonal polynomials with respect to $L^2$-inner product, defined on the interval $[-1,1]$. 
Here we collect some relevant results for the Legendre polynomials (see, for example, \cite{spectral_method}):
\begin{itemize}
    \item Three-terms recurrence relation:
    \[(n+1)L_{n+1}(x)=(2n+1)xL_n(x)-nL_{n-1}(x),\quad {\rm{with}}\,\,L_0(x)=1\,\,{\rm{and}}\,\,L_1(x)=x.\]
    \item The $n$-th Legendre polynomial can be written as the expansion
    \[L_n(x)=\frac{1}{2^n}\sum^{[n/2]}_{k=0}(-1)^k\frac{(2n-2k)!}{2^nk!(n-k)!(n-2k)!}x^{n-2k}.\]
    \item Sturm--Liouville problem:
    \[\left((1-x^2)L'_n(x)\right)'+n(n+1)L_n(x)=0.\]
    \item Rodriques' formula:
    \[L_n(x)=\frac{1}{2^nn!}\frac{{\rm{d}}^n}{{\rm{d}}x^n}\left[\left(x^2-1\right)^n\right].\]
\end{itemize}
To make the stiffness and mass matrices to be sparse in the numerical approximation, and enable us to impose exact boundary conditions, we adopt the following compact combination of Legendre Polynomials as basis functions:
\begin{equation}\label{basis}
    \phi_k(x)=L_k(x)+a_kL_{k+1}(x)+b_kL_{k+2}(x),
\end{equation}
where the parameters $\{a_k,b_k\}$ are chosen to satisfy the boundary condition of the given problem. In fact, it is straightforward to show that for all $k\in\mathbb{N}$, the set $\{a_k,b_k\}$ can be uniquely determined so that the basis function \eqref{basis} satisfies the given boundary condition either in Dirichlet sense or in Neumann sense. 

Let us discuss the Legendre--Galerkin spectral method. 
The variational form of the equation \eqref{main_eq} can be written as
\begin{equation}\label{WF}
    \int_I\left(u'(x)v'(x)+\nu u(x)v(x)\right)\dx=\int_If(x)v(x)\dx\quad\forall v\in H,
\end{equation}
for a suitable Hilbert space $H$. 
The existence and uniqueness of the solution  \eqref{WF} follow by the Lax--Milgram theorem \cite{evans, GT}. The idea of the  Legendre--Galerkin method is to approximate the solution $u\in H$ by
\begin{equation}\label{sol_expan}
    u_N(x)=\sum^{N-1}_{k=1}\alpha_k\phi_k(x),
\end{equation}
where $N$ is the number of nodal points on $I$ and $\phi_k$ is defined in \eqref{basis}. The coefficients $\{\alpha_k\}_{k=1}^{N-1}$ can be determined by solving the discrete approximation of \eqref{WF}, namely,
\begin{equation}\label{DWF}
    \int_I\left(u'_N(x)v'_N(x)+\nu u_N(x)v_N(x)\right)\dx=\int_If(x)v_N(x)\dx\quad\forall v_N\in H_N,
\end{equation}
where $H_N={\rm{span}}\,\{\phi_1,\phi_1,\cdots,\phi_{N-1}\}$. More precisely, if we write $S=(S_{ij})$, $M=(M_{ij})\in\R^{(N-1)\times(N-1)}$, $F=(F_j)\in\R^{N-1}$ with
\begin{equation}\label{vec_form}
    S_{ij}=\int_I\phi'_i(x)\phi'_j(x)\dx,\quad
    M_{ij}=\int_I\phi_i(x)\phi_j(x)\dx,\quad
    F_j=\int_If(x)\phi_j(x)\dx,
\end{equation}
\eqref{DWF} is equivalent to the linear algebraic system
\begin{equation}\label{LAS}
    (S+\nu M)\alpha=F,
\end{equation}
for $\alpha=(\alpha_j)\in\R^{N-1}$, and \eqref{DWF} can be exactly solved by
\begin{equation}\label{solve_coef}
    \alpha=(S+\nu M)^{-1}F.
\end{equation}

\begin{figure}
\includegraphics[width=0.9\textwidth ]{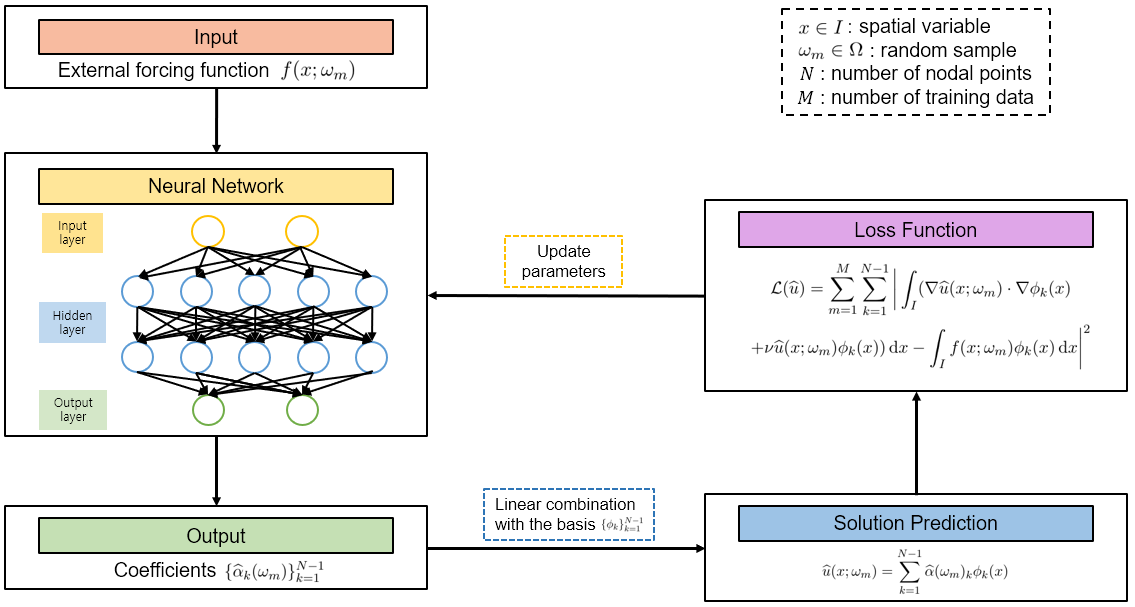}
\caption{Schematic description of the unsupervised Legendre--Galerkin neural network (ULGNet).\label{overall_pic}}
\end{figure}

The Unsupervised Legendre--Galerkin neural network, proposed in \cite{CKH2022}, is based on the above Legendre--Galerkin spectral element methods. For each given $f$, instead of computing the coefficients by using \eqref{LAS}-\eqref{solve_coef}, we approximate the coefficients $\alpha$ by a deep neural network. The input of the neural network is external forcing terms, parametrized by the random parameter $\om$ contained in the (possibly high-dimensional) probability space $(\Omega,\mathcal{G},\mathbb{P})$, where $\Omega$ is assumed to be compact. One typical example is the random external force given by
\[f(x;\om)=\om_1\sin(2\pi\om_2x)+\om_3\cos(2\pi\om_4x),\]
where $\om=(\om_1,\om_2,\om_3,\om_4)$ is i.i.d. uniformly distributed with $\omega_j \in [a,b]$ for $a, b \in \mathbb{R}^+$ and $1 \leq j \leq 4$. Throughout the whole paper, we will interpret $f(x;\om)$ as a bivariate function defined on $I\times\Omega$, and assume that
\begin{equation}\label{f_ass}
    f(x;\om)\in C(\Omega;L^1(I)).
\end{equation}
This input feature passes through the neural network and generates the coefficients $\{\widehat{\alpha}_k\}$ as an output. Then we reconstruct the solution by
\begin{equation}\label{sol_recon}
    \widehat{u}(x;\om)=\sum_{k=1}^{N-1}\widehat{\alpha}_k(\om)\phi_k(x).
\end{equation}
In order to train the neural network, we utilize the variational formulation \eqref{DWF} for the residual minimization and define the population loss function by
\begin{equation}\label{pop_loss}
    \LL(\alpha)=\mathbb{E}_{\om\sim\mathbb{P}_{\Omega}}\left[\sum^{N-1}_{k=1}\bigg|\int_I\left(\widehat{u}'(x;\om)\phi'_k(x)+\nu \widehat{u}(x;\om)\phi_k(x)\right)\dx-\int_If(x;\om)\phi_k(x)\dx\bigg|^2\right].
\end{equation}
In practice, when we compute the approximate solution, we minimize the empirical loss function which is the Monte--Carlo integration of \eqref{pop_loss}:
\begin{equation}\label{emp_loss}
    \LL^M(\alpha)=\frac{|\Omega|}{M}\sum^M_{m=1}\sum^{N-1}_{k=1}\bigg|\int_I\left(\widehat{u}'(x;\om_m)\phi'_k(x)+\nu \widehat{u}(x;\om_m)\phi_k(x)\right)\dx-\int_If(x;\om_m)\phi_k(x)\dx\bigg|^2,
\end{equation}
where $M\in\mathbb{N}$ is a number of training data and $\{\om_m\}_{m=1}^{M}$ is an i.i.d. sequence of random variables distributed according to $\mathbb{P}_{\Omega}$. For each epoch, once the parameters of the neural network are trained in the direction of minimizing the empirical loss function $\LL^M$, the given external force functions pass through this updated neural network to produce more refined coefficients, and this process is repeated until the sufficiently small loss is achieved. Thanks to the intrinsic structure described above, ULGNet can be trained without training data and incorporate the exact boundary values to the approximate solutions. A schematic diagram of the ULGNet algorithm is depicted in Figure \ref{overall_pic}, and more details with some numerical experiments can be found in \cite{CKH2022}.

\subsection{Feed-forward neural networks}\label{neural_network}
In this section, we define a class of neural networks, which will serve as an ansatz space throughout the whole paper. For a positive integer $L\in\mathbb{N}$, we define an $L$-layer feed-forward neural network by a function $f^L(x):\R^{n_0}\rightarrow\R^{n_L}$, which is defined recursively by
\begin{equation}\label{nn_def}
    f^{\ell}(x)=W^{\ell}\sigma(f^{\ell-1}(x))+b^{\ell}\,\,\,{\rm{for}}\,\,2\leq\ell\leq L\quad{\rm{and}}\quad f^1(x)=W^1x+b^1.
\end{equation}
Here $W^{\ell}\in\R^{n_{\ell}\times n_{\ell-1}}$ is the weight matrix and $b^{\ell}\in\R^{n_{\ell}}$ is the bias vector for the $\ell$-th layer. Also, $\sigma:\R\rightarrow\R$ denotes the activation function, and for a vector $x=(x_1,\cdots,x_k)$, $\sigma(x)$ means the vector $(\sigma(x_1),\cdots,\sigma(x_k))$. We shall write the architecture of a network by the vector ${\vec{\boldsymbol{n}}}=(n_0,\cdots,n_L)$, the family of network parameters by $\theta:=\theta_{\vec{\boldsymbol{n}}}=\{(W^1,b^1),\cdots,(W^L,b^L)\}$, and its realization as a neural network by $\mathcal{R}[\theta](x)$. For a given network architecture $\vec{\boldsymbol{n}}$, the collection of all possible parameters is denoted by
\begin{equation}\label{nn_para_set}
    \Theta_{\vec{\boldsymbol{n}}}=\left\{\{(W^{\ell},b^{\ell})\}^L_{\ell=1}:W^{\ell}\in\R^{n_{\ell}\times n_{\ell-1}},\,b^{\ell}\in\R^{n_{\ell}}\right\}.
\end{equation}
 For the index for neural networks, we will follow the notations from \cite{SZK2020, notation}. For two different neural networks $\theta_i$, $i=1,2$, whose architectures are denoted by $\vec{\boldsymbol{n}}_i=\left(n^{(i)}_0,\cdots,n^{(i)}_{L_i}\right)$, we write $\vec{\boldsymbol{n}}_1\subset\vec{\boldsymbol{n}}_2$ provided that for all $\theta_1\in\Theta_{\vec{\boldsymbol{n}}_1}$, there exists $\theta_2\in\Theta_{\vec{\boldsymbol{n}}_2}$ satisfying $\mathcal{R}[\theta_1](x)=\mathcal{R}[\theta_2](x)$ for any $x\in\R^{n_0}$.
 
 Now let $\{\vec{\boldsymbol{n}}_n\}_{n\geq1}$ be a sequence of network architectures with the inclusion $\vec{\boldsymbol{n}}_n\subset \vec{\boldsymbol{n}}_{n+1}$ for any $n\in\mathbb{N}$. We shall define the corresponding sequence of neural network families by
\begin{equation}\label{nn_class}
    \NN^*_n=\{\mathcal{R}[\theta]:\theta\in\Theta_{\vec{\boldsymbol{n}}_n}\}.
\end{equation}
By the definition, it is easy to verify that $\NN^*_n\subset\NN^*_{n+1}$ for all $n\in\mathbb{N}$. We then state the following theorem on the universal approximation property.
\begin{theorem}\label{ass_1}
Let $K$ be a compact subset of $\R^{d}$ and $g\in C(K,\R^{D})$. Then we have
\begin{equation}\label{UAT}
    \lim_{n\rightarrow\infty}\inf_{\hat{g}\in\NN^*_n}\|\hat{g}-g\|_{C(K)}=0.
\end{equation}
\end{theorem}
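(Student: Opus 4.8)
The plan is to derive this from the classical universal approximation property for networks of a fixed architecture, using that the nested architectures $\vec{\boldsymbol{n}}_n$ eventually absorb every fixed architecture. Set $e_n\defeq\inf_{\hat g\in\NN^*_n}\norm{\hat g-g}_{C(K)}$. Since $\NN^*_n\subset\NN^*_{n+1}$, the sequence $\paren{e_n}_{n\ge1}$ is nonincreasing and hence convergent, so it suffices to prove that for each $\varepsilon>0$ there is an index $n_\varepsilon$ with $e_{n_\varepsilon}<\varepsilon$; letting $\varepsilon\downarrow0$ then gives $\lim_{n\to\infty}e_n=0$. One can moreover reduce to scalar targets: writing $g=(g_1,\dots,g_D)$ with $g_i\in C(K,\R)$, it is enough to approximate each component and to stack the scalar subnetworks in parallel (block-diagonal weight matrices, concatenated biases, one final linear layer assembling the $D$ outputs), which enlarges the architecture by only a bounded amount --- harmless in view of the absorption step below. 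So we may take $D=1$.

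Next I would fix $\varepsilon>0$ and invoke the classical density theorem for shallow networks (Leshno--Lin--Pinkus--Schocken; see also Pinkus): provided the activation $\sigma$ is continuous and not a polynomial --- a standing assumption met by every activation used in practice, such as the sigmoid, $\tanh$, or $\mathrm{ReLU}$ --- the set of one-hidden-layer realisations $x\mapsto\sum_{j=1}^{m}c_j\,\sigma(w_j\cdot x+b_j)$, $m\in\N$, $w_j\in\R^d$, $b_j,c_j\in\R$, is dense in $C(K)$ for every compact $K\subset\R^d$. (If one prefers to argue on all of $\R^d$, first extend $g$ continuously by the Tietze extension theorem; this is optional.) This produces $m_\varepsilon\in\N$ and parameters for which the two-layer network $\hat g_\varepsilon$, of the fixed architecture $\vec{\boldsymbol{m}}_\varepsilon=(d,m_\varepsilon,1)$, obeys $\norm{\hat g_\varepsilon-g}_{C(K)}<\varepsilon$.

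The final step is to place $\hat g_\varepsilon$ into the given sequence. Here one uses the hypothesis that $\set{\vec{\boldsymbol{n}}_n}$ increases under $\subset$ with widths and depths tending to infinity --- equivalently, that every fixed architecture $\vec{\boldsymbol{m}}$ satisfies $\vec{\boldsymbol{m}}\subset\vec{\boldsymbol{n}}_n$ for all $n$ large. Applying this to $\vec{\boldsymbol{m}}_\varepsilon$ yields an index $n_\varepsilon$ and, by the very definition of the inclusion $\subset$, a parameter $\theta\in\Theta_{\vec{\boldsymbol{n}}_{n_\varepsilon}}$ with $\mathcal{R}[\theta]\equiv\hat g_\varepsilon$ on $\R^d$; hence $\hat g_\varepsilon\in\NN^*_{n_\varepsilon}$ and $e_{n_\varepsilon}\le\norm{\hat g_\varepsilon-g}_{C(K)}<\varepsilon$, as required.

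The one genuinely delicate point --- essentially the whole content of the proof --- is the compatibility between the growth pattern built into $\set{\vec{\boldsymbol{n}}_n}$ and the architecture furnished by the density theorem. If some hidden width in the sequence is unbounded, the shallow result applies directly. If the sequence instead consists of networks of bounded width but growing depth, one must substitute a deep-narrow universal approximation theorem (Kidger--Lyons, Hanin--Sellke, Lu et al.), valid once the width exceeds $d+D$ by a small additive constant, and then repeat the matching argument verbatim. In any case, the standing assumption on $\sigma$ (continuous, non-polynomial) should be made explicit, since the conclusion fails without it.
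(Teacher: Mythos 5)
Your proposal is correct and follows essentially the same route as the paper, which does not give a self-contained proof but justifies the theorem by appealing to exactly the two results you invoke: the classical density theorem for arbitrary-width two-layer networks with a continuous non-polynomial activation, and the bounded-width, arbitrary-depth theorem of Kidger--Lyons, with $n$ indexing width or depth accordingly. Your added bookkeeping (monotonicity of $e_n$ via the nesting $\NN^*_n\subset\NN^*_{n+1}$, component-wise reduction, and the absorption of a fixed architecture into $\vec{\boldsymbol{n}}_n$ for large $n$) is routine and consistent with the paper's implicit hypotheses on the architecture sequence and activation, which you rightly note should be made explicit.
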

It can be proved that this statement holds under various scenarios. For example, the classical form of the universal approximation theorem for arbitrary width and bounded depth coincides with the above theorem provided that the activation function is not polynomial \cite{UA_1, UA_2, UA_3}. In this case, we only consider the class of two-layer neural networks ($L=2$), and $n$ can be interpreted as the index related to the number of neurons in the hidden layer. On the other hand, in the recent 
work of \cite{terry_lion}, the authors discussed the dual scenario: the networks of bounded width and arbitrary depth. The authors assumed that the activation function is a non-affine continuous function that is continuously differentiable at at least one point, with a nonzero derivative at that point. Under this assumption, they considered the space of feed-forward neural networks with an arbitrary number of hidden layers each with $d+D+2$ neurons, and proved \eqref{UAT}. In this case, $n_i=d+D+2$ for all $i\in\{1,\cdots,L-1\}$ and $n$ can be regarded as the index concerning the number of hidden layers ($n=L$).

In this paper, we will consider the bounded and continuous target functions. More precisely, in Section \ref{setting}, the function we aim to approximate by neural networks is $\alpha^*(\om)$ which is the solution of \eqref{LAS}, where $S$, $M$ are defined as before, but each component of the load vector $F_j(\om)=\int_If(x;\om)\phi_j(x)\dx$ is the function of $\om\in\Omega$. By the assumption \eqref{f_ass}, we know that $\alpha^*(\om)$ is continuous and bounded. Therefore, it suffices to consider the family of bounded neural networks as our ansatz space. 

Let us assume that the target function $\alpha^*$ in \eqref{solve_coef} has the bound $\sup_{\om\in\Omega}|\alpha^*(\om)|<C_{\alpha}$, and let $\sigma^*$ be a bounded activation function with continuous inverse (e.g. sigmoid or hyperbolic tangent activation functions) and $\sigma_{-}:=\inf_{x\in\R}\sigma^*(x)$ and $\sigma_{+}:=\sup_{x\in\R}\sigma^*(x)$. Next, we define the affine linear function $h:[\sigma_{-},\sigma_{+}]\rightarrow[-C_{\alpha},C_{\alpha}]$ with $h(\sigma_{-})=-C_{\alpha}$, $h(\sigma_{+})=C_{\alpha}$. Note that the function $h$ also has a continuous inverse.

We then consider the following class of neural networks:
\begin{equation}\label{bounded_nn_class}
    \NN_n=\left\{h\circ\sigma^*(g_n):g_n\in\NN_n^*\right\}.
\end{equation}
Then it is obvious that
\begin{equation}\label{bddness_NN}
    \|g\|_{C(\Omega)}\leq C_{\alpha},\quad\forall g\in\bigcup_{n\in\mathbb{N}}\NN_n.   
\end{equation}
By the definitions of $h$ and $\sigma^*$, $(\sigma^*)^{-1}\circ h^{-1}(\alpha^*)$ is well-defined, and is continuous on the compact set $\Omega$.
Then by Theorem \ref{ass_1}, we have
\begin{equation}\label{UAT_var_1}
    \lim_{n\rightarrow\infty}\inf_{\hat{g}\in\NN^*_n}\bigg\|\hat{g}-(\sigma^*)^{-1}\circ h^{-1}(\alpha^*)\bigg\|_{C(\Omega)}=0.
\end{equation}
Then from \eqref{UAT_var_1}, we obtain by the continuity of the function $h\circ\sigma^*(\cdot)$ that
\[
\lim_{n\rightarrow\infty}\inf_{\hat{g}\in\NN^*_n}\bigg\|h\circ \sigma^*(\hat{g})-\alpha^*\bigg\|_{C(\Omega)}=0.
\]
Therefore, by the definitions of $\NN_n$ and $\NN^*_n$, we have that
\begin{equation}\label{UAT_var_2}
    \lim_{n\rightarrow\infty}\inf_{\tilde{g}\in\NN_n}\|\tilde{g}-\alpha^*\|_{C(\Omega)}=
    \lim_{n\rightarrow\infty}\inf_{\hat{g}\in\NN^*_n}\bigg\|h\circ \sigma^*(\hat{g})-\alpha^*\bigg\|_{C(\Omega)}=0.
\end{equation}
The above argument is summarized in the following theorem concerning the universal approximation properties for bounded neural networks.
\begin{theorem}\label{UAT_main_thm}
Assume that $\Omega$ is compact and $\alpha^*:\Omega\rightarrow\R^{N-1}$ is a continuous function defined in \eqref{solve_coef}. Then we have
\begin{equation}\label{UAT_var_3}
    \lim_{n\rightarrow\infty}\inf_{\tilde{g}\in\NN_n}\|\tilde{g}-\alpha^*\|_{C(\Omega)}=0.
\end{equation}
\end{theorem}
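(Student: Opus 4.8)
The plan is to reduce the statement to the unconstrained universal approximation property of Theorem~\ref{ass_1} by peeling off the fixed post-composition $h\circ\sigma^*$ that defines the bounded class $\NN_n$ in \eqref{bounded_nn_class}. First I would check that $\alpha^*$ is a legitimate target. Writing $\alpha^*(\om)=(S+\nu M)^{-1}F(\om)$ with $F_j(\om)=\int_I f(x;\om)\phi_j(x)\dx$, the hypothesis \eqref{f_ass} that $f\in C(\Omega;L^1(I))$ forces $\om\mapsto F(\om)$ to be continuous, hence so is $\alpha^*$, since $(S+\nu M)^{-1}$ is a fixed invertible matrix; compactness of $\Omega$ then yields the bound $\sup_{\om\in\Omega}|\alpha^*(\om)|<C_{\alpha}$ that was used to fix $C_{\alpha}$ and the affine map $h$.

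Next, since $h:[\sigma_{-},\sigma_{+}]\to[-C_{\alpha},C_{\alpha}]$ is an affine bijection with continuous inverse and $\sigma^*$ is a bounded activation with continuous inverse, the function $g^*:=(\sigma^*)^{-1}\circ h^{-1}\circ\alpha^*$ is well defined and continuous on $\Omega$; note that the \emph{strict} inequality $\sup_{\om\in\Omega}|\alpha^*(\om)|<C_{\alpha}$ is exactly what guarantees that $h^{-1}(\alpha^*(\om))$ stays in the interior of the range of $\sigma^*$, so that $(\sigma^*)^{-1}$ may legitimately be applied. Applying Theorem~\ref{ass_1} with $K=\Omega$ and $D=N-1$ to $g^*\in C(\Omega,\R^{N-1})$ produces a sequence $\hat g_n\in\NN^*_n$ with $\|\hat g_n-g^*\|_{C(\Omega)}\to0$.

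Finally I would post-compose. Because $\hat g_n\to g^*$ uniformly, the ranges of the $\hat g_n$ eventually lie in a fixed compact set $K'\subset\R^{N-1}$ on which the (componentwise) map $h\circ\sigma^*$ is uniformly continuous; hence $\|h\circ\sigma^*(\hat g_n)-h\circ\sigma^*(g^*)\|_{C(\Omega)}\to0$, and since $h\circ\sigma^*\circ(\sigma^*)^{-1}\circ h^{-1}$ is the identity on $[-C_{\alpha},C_{\alpha}]$ we have $h\circ\sigma^*(g^*)=\alpha^*$. As $h\circ\sigma^*(\hat g_n)\in\NN_n$ by \eqref{bounded_nn_class}, this bounds $\inf_{\tilde g\in\NN_n}\|\tilde g-\alpha^*\|_{C(\Omega)}$ by a null sequence, and non-negativity of the infimum gives \eqref{UAT_var_3}.

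The only genuinely delicate point is this last post-composition step: uniform convergence is not preserved under composition with a merely continuous outer function in general, so one must exploit that the relevant values live in a compact set where $h\circ\sigma^*$ is uniformly continuous (for $\sigma^*$ equal to the sigmoid or $\tanh$ one may instead invoke a global Lipschitz bound on $h\circ\sigma^*$ and bypass the compactness argument entirely). Everything else is routine bookkeeping about continuity and the fixed linear algebra of \eqref{LAS}.
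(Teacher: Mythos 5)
Your proposal follows essentially the same route as the paper: transform the target via $(\sigma^*)^{-1}\circ h^{-1}$, apply Theorem~\ref{ass_1} on the compact set $\Omega$, and post-compose with $h\circ\sigma^*$ to land back in $\NN_n$. Your extra care about why $(\sigma^*)^{-1}\circ h^{-1}(\alpha^*)$ is well defined (the strict bound $\sup_{\om}|\alpha^*(\om)|<C_{\alpha}$) and about uniform continuity of $h\circ\sigma^*$ on a compact range when passing to the limit only tightens steps the paper treats briefly, so the argument is correct and matches the paper's proof.
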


\subsection{Problem formulation}\label{setting}
In this section, we shall state the problem under consideration in detail, and describe the mathematical setting and the strategy for the analysis. Henceforth, for given external force $f(x;\om)$ parametrized by $\om\in\Omega$, $u(x;\om)$ denotes the corresponding true solution (weak solution) of the equation \eqref{main_eq} either with Dirichlet boundary condition or Neumann boundary condition, and $u_N$ signifies the Legendre--Galerkin approximation 
\begin{equation}\label{for_0}
    u_N(x,\om)=\sum_{k=1}^{N-1}\alpha^*_k(\om)\phi_k(x),    
\end{equation}
where $\{\alpha^*_k(\om)\}$ is the set of coefficients computed by \eqref{solve_coef}.
Here we shall use the same notations $S$, $M$ and $F(\om)$ as defined in \eqref{vec_form}.

Note that both $u$ and $u_N$ depend on the external force $f(x;\om)$ and hence, depend on the parametric variable $\om\in\Omega$. In particular, $\alpha^*$ can be regarded as a function contained in $C(\Omega,\R^{N-1})$ provided that $f(x;\om)$ satisfies the assumption \eqref{f_ass} . In this regard, we can interpret $\alpha^*$ as the solution to the minimization problem which is equivalent to \eqref{DWF}:
\begin{equation}\label{for_1}
    \alpha^*=\argmin_{\alpha\in C(\Omega,\R^{N-1})} \LL(\alpha),
\end{equation}
where $\LL$ is the population loss function defined in \eqref{pop_loss}.

Next, we define the neural network approximation of $\alpha^*$. More precisely, we look for $\widehat{\alpha}(n):\Omega\rightarrow\R^{N-1}$ solving the continuous residual minimization problem
\begin{equation}\label{for_2}
    \widehat{\alpha}(n)=\argmin_{\alpha\in\NN_n}\LL(\alpha),
\end{equation}
where the minimizer was found over the family of neural networks $\NN_n$ for $n\in\mathbb{N}$, and we shall write the associate solution by
\begin{equation}\label{for_3}
    u_{N,n}(x;\om)=\sum^{N-1}_{k=1}\widehat{\alpha}(n)_k(\om)\phi_k(x).
\end{equation}

Finally, let us define the solution to the discrete residual minimization problem, namely,
\begin{equation}\label{for_4}
    \widehat{\alpha}(n,M)=\argmin_{\alpha\in\NN_n}\LL^M(\alpha),
\end{equation}
where the empirical loss $\LL^M$ was defined in \eqref{emp_loss}. In addition, we shall write the corresponding solution as
\begin{equation}\label{for_5}
    u_{N,n,M}(x;\om)=\sum^{N-1}_{k=1}\widehat{\alpha}(n,M)_k(\om)\phi_k(x).
\end{equation}

In the present paper, we shall ignore the error caused by the minimization, and assume that we can always find the exact minimizer for the minimization problems \eqref{for_2} and \eqref{for_4}. Therefore, we shall employ $\widehat{\alpha}(n,M)$ as the neural network approximation for the exact coefficient $\alpha^*$, and $u_{N,n,M}$ is our approximate solution computed by the numerical scheme described in Section \ref{ULGNet_intro}.
If we were to summarize once more, $N$ stands for the parameter for the Legendre--Galerkin method (number of basis functions), $n$ is the parameter for the neural network architecture, and $M$ denotes the number of sampling points when we compute the discrete loss function. 

The main objective of this paper is to show the convergence 
\begin{equation}\label{main_error}
    \|u-u_{N,n,M}\|_{L^2(\Omega;H^1(I))}\rightarrow0\quad{\rm{as}}\,\,N,n,M\rightarrow\infty.
\end{equation}
For this purpose, we split the error in \eqref{main_error} into three parts:
\begin{equation}\label{error_split}
    u-u_{N,n,M}=
    (u-u_{N})+
    (u_{N}-u_{N,n})+
    (u_{N,n}-u_{N,n,M}).
\end{equation}
The first term on the right-hand side of \eqref{error_split} is the error for the Legendre--Galerkin approximation. 
It is widely known that the Legendre--Galerkin approximation error is sufficiently small in the numerical analysis, and hence this error can be ignored in this paper.
In fact, it is well known that if the solution $u$ is in $H^m$ for some $m\in\mathbb{N}$, then the corresponding approximate solution $u_N$ is close to $u$ within $\mathcal{O}(N^{-m})$ accuracy. Hence, if the solution $u^*$ is sufficiently smooth, the Legendre--Galerkin method achieves exponential convergence. Furthermore, as $N$ increases, the condition number of the capacitance matrix $S+\nu M$ grows, and hence, the large $N$ may deteriorate numerical convergence in practice. See, for example, Table III in \cite{SM_new} where $N$ increased from $64$ to $128$, but the computed $L^{\infty}$ error increased slightly. 
Therefore, a suitable choice of $N$ guarantees that the approximate error is negligible, which justifies our assumption. In the rest of the paper, we will assume that we have found sufficiently good $N$ and we will fix that $N$ in the analysis.

The second term is called the {\textit{approximation error}}, since it occurs when we approximate the target function with the functions in our ansatz space. The third term is often referred to as the {\textit{generalization error}}, which is the error of the neural network-based approximation on predicting unseen data. 

We will first perform the convergence analysis of both approximation error and generalization error for the corresponding coefficients neural networks, i.e., \begin{equation}\label{nn_error}
    \|\alpha^*-\widehat{\alpha}(n)\|_{L^2(\Omega)} + \|\widehat{\alpha}(n)-\widehat{\alpha}(n,M)\|_{L^2(\Omega)}.
\end{equation}
This error estimate for the neural networks is the crucial step in our analysis. We will discuss the convergence of the error \eqref{nn_error} in Section \ref{aca}, and based on this result, we will conduct the convergence analysis of the approximate solution \eqref{main_error} in Section \ref{PDE_app}.

\section{Abstract convergence analysis}\label{aca}

In this section, we shall perform an abstract convergence analysis for the error \eqref{nn_error}. We begin with an observation on the structure of the loss functions. Note by the definition of  $\LL(\alpha)$ and \eqref{sol_expan} that
\begin{equation}\label{obs_J}
\begin{aligned}
    \LL(\alpha)
    &=\int_{\Omega}\sum^{N-1}_{j=1}\bigg|\sum^{N-1}_{k=1}\alpha_k(\om)\left(\int_I\phi'_k(x)\phi'_j(x)\dx\right)-\int_If(x;\om)\phi_j(x)\bigg|^2\dom\\
    &=\int_{\Omega}\sum^{N-1}_{j=1}\bigg|\left(A\alpha(\om)\right)_j-\left(F(\om)\right)_j\bigg|^2\dom=\norm{A\alpha(\om)-F(\om)}^2_{L^2(\Omega)},
\end{aligned}
\end{equation}
where $A:=S+\nu M$ and $F(\om)$ are defined in \eqref{vec_form}. Similarly, we also have
\begin{equation}\label{obs_JM}
    \LL^M(\alpha)=\frac{|\Omega|}{M}\sum_{m=1}^M|A\alpha(\om_m)-F(\om_m)|^2.
\end{equation}
Here the matter on the loss functions $\LL$ and $\LL^M$ has been converted into the analysis of the residual induced by the matrix equation defined by $A$. Note that the matrix $A$ depends on the structure of the given differential equations, boundary conditions and the choice of basis functions, and hence the general analysis of the matrix $A$ that can cover various PDE settings and numerical methods is important. In the next proposition, we shall present the result on the structure of the matrix $A$ which can cover a wide range of settings of interest.

\begin{proposition}\label{matrix_thm}
Assume that $A\in\R^{d\times d}$ is a symmetric, non-singular matrix and let $\rho_{\min}=\min_i\{|\lambda_i|\}$, $\rho_{\max}=\max_i\{|\lambda_i|\}$ where $\{\lambda_i\}$ is the set of eigenvalues of $A$. Then for any $x\in\R^d$ we have
\begin{equation}\label{eigen_est}
    \rho_{\min}|x|\leq|Ax|\leq\rho_{\max}|x|.
\end{equation}
\end{proposition}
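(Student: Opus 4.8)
The statement is the elementary fact that a symmetric non-singular matrix has operator norm (and minimal stretching factor) governed by its eigenvalues. The plan is to diagonalize $A$ via the spectral theorem and read off the bounds in the eigenbasis.

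Here is the outline. First, since $A\in\R^{d\times d}$ is symmetric, by the spectral theorem there exists an orthogonal matrix $Q$ (so $Q^{\top}Q = I$) and a diagonal matrix $\Lambda = \op{diag}(\lambda_1,\dots,\lambda_d)$ with real entries such that $A = Q\Lambda Q^{\top}$. Non-singularity of $A$ means every $\lambda_i \neq 0$, so $\rho_{\min} = \min_i |\lambda_i| > 0$ is a genuine positive lower bound. Next, fix $x \in \R^d$ and set $y = Q^{\top}x$. Because $Q$ is orthogonal it preserves the Euclidean norm, so $|y| = |x|$ and $|Ax| = |Q\Lambda Q^{\top}x| = |\Lambda y|$. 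Now the computation is transparent: $|\Lambda y|^2 = \sum_{i=1}^{d}\lambda_i^2 y_i^2$, and since $\rho_{\min}^2 \le \lambda_i^2 \le \rho_{\max}^2$ for every $i$, we get $\rho_{\min}^2 |y|^2 \le |\Lambda y|^2 \le \rho_{\max}^2 |y|^2$. Taking square roots and substituting $|y| = |x|$ and $|\Lambda y| = |Ax|$ yields \eqref{eigen_est}.

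There is essentially no obstacle here; the only things to be careful about are (i) invoking non-singularity to ensure $\rho_{\min}>0$ (otherwise the lower bound is vacuous but still true), and (ii) using orthogonality of $Q$ twice — once to say $|Q^{\top}x| = |x|$ and once to say $|Qz| = |z|$ — which is where symmetry of $A$ is genuinely used (a general invertible matrix would only give singular-value bounds, not eigenvalue bounds). If one prefers to avoid the spectral theorem one could instead argue via the Rayleigh quotient for the symmetric positive semidefinite matrix $A^{\top}A = A^2$, whose eigenvalues are $\lambda_i^2$, giving $\rho_{\min}^2|x|^2 \le x^{\top}A^2 x = |Ax|^2 \le \rho_{\max}^2 |x|^2$ directly; but the diagonalization argument is cleaner and more self-contained.

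I would present the spectral-theorem version, as it is shortest and makes the role of symmetry explicit, and it generalizes painlessly should one later want the analogous two-sided bound in a weighted norm.
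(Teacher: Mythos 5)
Your proposal is correct and follows essentially the same route as the paper: both invoke the spectral theorem for the symmetric matrix $A$ and bound $|Ax|^2=\sum_i \lambda_i^2 y_i^2$ termwise in the orthonormal eigenbasis (your $y=Q^{\top}x$ is just the paper's expansion $x=\sum_j a_j x_j$ in matrix form), using non-singularity to ensure $\rho_{\min}>0$. No substantive difference to report.
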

\begin{proof}
By the spectral theorem, we can choose the set $\{x_1,\cdots,x_d\}$, where $x_j$ is the corresponding eigenvector for the eigenvalue $\lambda_j$ of $A$, and $\{x_1,\cdots,x_d\}$ forms an orthonormal basis of $\R^d$. Hence for arbitrary $x\in\R^d$, we can write $x=\sum^d_{j=1}a_jx_j$. for $a_j\in\R$. Then by the orthonormality and the definition of eigenvalue, we have
\[
    |Ax|^2=\bigg|A\sum^d_{j=1}a_jx_j\bigg|^2=\bigg|\sum^d_{j=1}a_jAx_j\bigg|^2=\bigg|\sum^d_{j=1}a_j\lambda_jx_j\bigg|^2=\sum^d_{j=1}(a_j\lambda_j)^2.
\]
Since $A$ is non-singular, all the eigenvalues of $A$ are nonzero, and from the above inequality, we can conclude that
\[
    \rho^2_{\min}|x|^2\leq|Ax|^2\leq\rho^2_{\max}|x|^2,
\]
which completes the proof.
\end{proof}

Throughout the section, we will utilize the matrix $A=S+\nu M$ defined in \eqref{vec_form}, and assume that $A$ is symmetric and non-singular. As it will be made clear in the next section, $A$ is indeed symmetric and non-singular in many cases including the situation under consideration in this paper. Henceforth, we set $\Omega=[a,b]^d$ for $a,b \in \mathbb{R}^+$.

\subsection{Approximation error}
This subsection aims to prove that the approximation error for the neural networks converges to zero, which is encapsulated in the following theorem. The idea of the proof is to make a connection between the error and the loss function evaluated at the minimizer, which actually converges to zero as the network architecture grows.
\begin{theorem}\label{approx_main_thm}
Assume that \eqref{f_ass} holds. For $\alpha^*\in C(\Omega,\R^{N-1})$ defined in \eqref{for_1} and $\widehat{\alpha}(n)\in\NN_n$ defined from \eqref{for_2}, we have that
\[
\|\alpha^*-\widehat{\alpha}(n)\|_{L^2(\Omega)}\rightarrow0\quad{\rm{as}}\,\,n\rightarrow\infty.
\]
\end{theorem}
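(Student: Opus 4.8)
The plan is to exploit the explicit quadratic structure of the population loss $\LL(\alpha) = \norm{A\alpha(\om) - F(\om)}_{L^2(\Omega)}^2$ recorded in \eqref{obs_J}, together with the spectral bound of Proposition \ref{matrix_thm} and the bounded-network universal approximation result of Theorem \ref{UAT_main_thm}. First I would observe that $\alpha^*(\om) = A^{-1}F(\om)$ is the pointwise (in $\om$) minimizer of the integrand, so $\LL(\alpha^*) = 0$, and for any $\alpha \in C(\Omega,\R^{N-1})$ we have the identity $A\alpha(\om) - F(\om) = A(\alpha(\om) - \alpha^*(\om))$. Applying the lower bound in \eqref{eigen_est} with $A = S+\nu M$ (symmetric and non-singular by the standing assumption of this section, with $\rho_{\min} > 0$) gives
\[
\LL(\alpha) = \int_\Omega \abs{A(\alpha(\om)-\alpha^*(\om))}^2 \dom \geq \rho_{\min}^2 \int_\Omega \abs{\alpha(\om) - \alpha^*(\om)}^2 \dom = \rho_{\min}^2 \norm{\alpha - \alpha^*}_{L^2(\Omega)}^2.
\]
Thus it suffices to show $\LL(\widehat{\alpha}(n)) \to 0$ as $n \to \infty$.

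Next I would bound $\LL(\widehat{\alpha}(n))$ from above using the definition \eqref{for_2} of $\widehat{\alpha}(n)$ as the minimizer of $\LL$ over $\NN_n$: for any $\tilde g \in \NN_n$,
\[
\LL(\widehat{\alpha}(n)) \leq \LL(\tilde g) = \int_\Omega \abs{A(\tilde g(\om) - \alpha^*(\om))}^2 \dom \leq \rho_{\max}^2 \abs{\Omega}\, \norm{\tilde g - \alpha^*}_{C(\Omega)}^2,
\]
where I used the upper bound in \eqref{eigen_est} and then estimated the $L^2(\Omega)$-norm by the sup-norm times $\abs{\Omega}^{1/2}$. Taking the infimum over $\tilde g \in \NN_n$ and invoking Theorem \ref{UAT_main_thm}, the right-hand side tends to $0$ as $n \to \infty$. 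Combining the two displays,
\[
\norm{\alpha^* - \widehat{\alpha}(n)}_{L^2(\Omega)}^2 \leq \frac{1}{\rho_{\min}^2}\LL(\widehat{\alpha}(n)) \leq \frac{\rho_{\max}^2 \abs{\Omega}}{\rho_{\min}^2} \inf_{\tilde g \in \NN_n} \norm{\tilde g - \alpha^*}_{C(\Omega)}^2 \longrightarrow 0,
\]
which is the claim.

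The one point requiring a little care — and which I expect to be the only real obstacle — is the well-definedness of $\widehat{\alpha}(n)$ in \eqref{for_2}, i.e. the existence of a minimizer of $\LL$ over $\NN_n$; since the paper explicitly assumes throughout Section \ref{setting} that the exact minimizers of \eqref{for_2} and \eqref{for_4} can be found, this is not an issue to be proved here. One should also check that $\alpha^* \in C(\Omega,\R^{N-1})$ (so that the sup-norm in Theorem \ref{UAT_main_thm} is finite and the theorem applies): this follows from assumption \eqref{f_ass}, since $\om \mapsto F(\om)$ with $F_j(\om) = \int_I f(x;\om)\phi_j(x)\dx$ is continuous on $\Omega$ by $f \in C(\Omega;L^1(I))$ together with $\phi_j \in L^\infty(I)$, and $\alpha^* = A^{-1}F$ depends continuously (indeed linearly) on $F$; moreover $\Omega = [a,b]^d$ is compact. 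Everything else is a direct chain of the inequalities above, with no hidden estimates.
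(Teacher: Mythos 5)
Your proposal is correct and follows essentially the same route as the paper's own proof: both bound $\|\alpha^*-\widehat{\alpha}(n)\|_{L^2(\Omega)}^2$ from below by $\LL(\widehat{\alpha}(n))$ via the spectral bounds of Proposition \ref{matrix_thm} (using $A\alpha^*=F$), then use minimality of $\widehat{\alpha}(n)$ over $\NN_n$ and Theorem \ref{UAT_main_thm} to send the loss to zero. The only differences are cosmetic — you track the constants $\rho_{\min},\rho_{\max},|\Omega|$ explicitly and use the identity $A\alpha-F=A(\alpha-\alpha^*)$ where the paper uses a triangle inequality hidden in $\lesssim$.
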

\begin{proof}
Since $A$ is symmetric and non-singular, by Proposition \ref{matrix_thm}, we have
\begin{equation}\label{mid_1}
    \|\alpha^*-\widehat{\alpha}(n)\|^2_2\lesssim\|A\alpha^*-A\widehat{\alpha}(n)\|^2_2\lesssim\|A\alpha^*-F\|^2_2+\|A\widehat{\alpha}(n)-F\|^2_2=\LL(\widehat{\alpha}(n)),
\end{equation}
where we have used \eqref{LAS}. Since $\widehat{\alpha}(n)$ is the minimizer of $\LL(\cdot)$ over $\NN_n$, we obtain
\begin{align*}
    \LL(\widehat{\alpha}(n))
    &\leq\inf_{\alpha\in\NN_n}\LL(\alpha)=\inf_{\alpha\in\NN_n}\|A\alpha-F\|^2_2\\
    &\lesssim\inf_{\alpha\in\NN_n}\left(\|A\alpha-A\alpha^*\|^2_2+\|A\alpha^*-F\|^2_2\right)\\
    &\lesssim\inf_{\alpha\in\NN_n}\|\alpha-\alpha^*\|^2_2,
\end{align*}
where we have used \eqref{LAS} and Proposition \ref{matrix_thm} once more. Then by the universal approximation property (Theorem \ref{UAT_main_thm}), $\inf_{\alpha\in\NN_n}\|\alpha-\alpha^*\|^2_2\rightarrow0$ as $n\rightarrow\infty$ and hence $\LL(\widehat{\alpha}(n))\rightarrow0$ as $n\rightarrow\infty$. Together with \eqref{mid_1}, this completes the proof. 
\end{proof}

\begin{remark}
In many cases, the explicit convergence rate for the approximation error can be computed. See, for instance, \cite{UAT_rate_1, UAT_rate_2} where the approximation of continuous functions with deep ReLU networks are discussed. However, the approximation rates for continuous functions deteriorate as the dimension increases. This can be tackled by assuming that the target function lies in a suitable smaller function
class which has low complexity compared to the functions with classical regularity. One function class of this type is the so-called Barron space introduced in the seminal work \cite{barron_1}. If the target function $\alpha^*$ belongs to the Barron space, then approximating it by two-layer networks has the explicit convergence rate $\mathcal{O}(n^{-\frac{1}{2}})$. See also \cite{barron_2, barron_3, rade_upper_2, barron_4, barron_5, rade_upper_3} for some variants of Barron spaces and their approximation properties by two-layer neural networks. In our case, if the external force $f(x;\om)$ is, for example, Gaussian, positive definite, radial, or smooth with respect to $\om\in\Omega$ (see \cite{barron_1, rade_upper_2} where the characterization of Barron functions is discussed), then $F(\om)$ is the Barron function, and hence our target function $\alpha^*(\om)=A^{-1}F(\om)$ becomes the Barron function.
\end{remark}

\subsection{Generalization error}
To handle the generalization error, we need to introduce the quantity so-called {\textit{Rademacher complexity}} \cite{Rade_3}. For more information on the Rademacher complexity and related topics, see \cite{Rade_1, Rade_2, Rade_4}.
\begin{definition}
For a family $\{X_i\}_{i=1}^M$ of i.i.d. random variables according to $\mathbb{P}_{\Omega}$, we define the Rademacher complexity of the function class $\mathcal{F}$ by
\[
    R_M(\mathcal{F})=\mathbb{E}_{\{X_i,\varepsilon_i\}^M_{i=1}}\bigg[\sup_{f\in\mathcal{F}}\bigg|\frac{1}{M}\sum^M_{i=1}\varepsilon_if(X_i)\bigg|\bigg],
\]
where $\varepsilon_i$'s are i.i.d. Bernoulli random variables, i.e., $\mathbb{P}(\varepsilon_i=1)=\mathbb{P}(\varepsilon_i=-1)=\frac{1}{2}$.
\end{definition}
Note from the above definition that the Rademacher complexity is the average of the maximum correlation between
the vector $(f(X_1),\cdots,f(X_M))$ and the random noise $(\varepsilon_1,\cdots,\varepsilon_M)$, where the maximum is
chosen over the function class $\mathcal{F}$. Intuitively, the Rademacher complexity of the function class $\mathcal{F}$ measures the ability of functions from $\mathcal{F}$ to fit random noise. If we can always find a function
which has a high correlation with a randomly selected noise vector, the corresponding function class can be interpreted as too large for statistical purposes. Conversely, if the Rademacher complexity decays as the sample size $M$ increases, then it is impossible to find a function that highly correlates in expectation with random noise.

We now make the precise connection between Rademacher complexity and the generalization error. In particular, we shall use the result that for any bounded function class $\mathcal{F}$, the condition $R_M({\mathcal{F}}) = o(1)$ implies the convergence of the generalization error, which is called the uniform law of large numbers. In the following theorem, the function class is assumed to be $b$-uniformly bounded, meaning that $\|f\|_{\infty}\leq b$ for all $f\in\mathcal{F}$.

\begin{theorem}\label{rade_thm}
{\rm{[Theorem 4.10 in \cite{Rade_2}]}} Let $\mathcal{F}$ be a $b$-uniformly bounded class of functions and $M\in\mathbb{N}$. Then for any small number $\delta>0$, we have
\[
\sup_{f\in\mathcal{F}}\bigg|\frac{1}{M}\sum^M_{i=1}f(X_i)-\mathbb{E}[f(X)]\bigg|\leq 2R_M(\mathcal{F})+\delta,
\]
with probability at least $1-\exp(-\frac{M\delta^2}{2b^2})$.
\end{theorem}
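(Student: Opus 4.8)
The plan is to follow the classical symmetrization argument combined with a bounded-differences concentration estimate, which is the standard route to a uniform law of large numbers of this type. Throughout, write $Z:=\sup_{f\in\mathcal{F}}\big|\frac{1}{M}\sum_{i=1}^M f(X_i)-\mathbb{E}[f(X)]\big|$ for the quantity to be controlled. \textbf{Step 1 (concentration of $Z$ about its mean).} First I would verify that $Z$ has the bounded-differences property: if the sample $(X_1,\dots,X_M)$ is modified in a single coordinate, say $X_i$ is replaced by $X_i'$, then since each $f\in\mathcal{F}$ satisfies $\|f\|_\infty\leq b$, only the $i$-th summand (divided by $M$) changes, and the supremum over $\mathcal{F}$ of functions perturbed by at most $2b/M$ changes by at most $2b/M$. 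Applying McDiarmid's inequality with these increments gives
\[
\mathbb{P}\big(Z\geq\mathbb{E}[Z]+\delta\big)\leq\exp\!\left(-\frac{2\delta^2}{\sum_{i=1}^M(2b/M)^2}\right)=\exp\!\left(-\frac{M\delta^2}{2b^2}\right),
\]
i.e. $Z\leq\mathbb{E}[Z]+\delta$ with probability at least $1-\exp(-M\delta^2/(2b^2))$.

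\textbf{Step 2 (symmetrization to bound $\mathbb{E}[Z]$).} Next I would show $\mathbb{E}[Z]\leq 2R_M(\mathcal{F})$. Introduce an independent ghost sample $\{X_i'\}_{i=1}^M$ with the same law, so that $\mathbb{E}[f(X)]=\mathbb{E}_{X'}\big[\frac{1}{M}\sum_i f(X_i')\big]$; pulling this expectation outside the supremum and the absolute value via Jensen's inequality yields
\[
\mathbb{E}[Z]\leq\mathbb{E}_{X,X'}\sup_{f\in\mathcal{F}}\bigg|\frac{1}{M}\sum_{i=1}^M\big(f(X_i)-f(X_i')\big)\bigg|.
\]
Because $f(X_i)-f(X_i')$ is a symmetric random variable, its joint law is unchanged upon multiplying by independent Rademacher signs $\varepsilon_i$; hence the right-hand side equals $\mathbb{E}_{X,X',\varepsilon}\sup_f\big|\frac{1}{M}\sum_i\varepsilon_i(f(X_i)-f(X_i'))\big|$, and the triangle inequality bounds this by $2\,\mathbb{E}_{X,\varepsilon}\sup_f\big|\frac{1}{M}\sum_i\varepsilon_i f(X_i)\big|=2R_M(\mathcal{F})$. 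Uniform boundedness guarantees all the integrals above are finite, so these manipulations are justified.

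\textbf{Step 3 (combine).} Chaining Steps 1 and 2 gives $Z\leq 2R_M(\mathcal{F})+\delta$ on an event of probability at least $1-\exp(-M\delta^2/(2b^2))$, which is exactly the assertion. I expect the main point requiring care to be Step 1: confirming the bounded-differences constant is $2b/M$ rather than $b/M$, and handling measurability of $Z$ when $\mathcal{F}$ is an arbitrary (possibly uncountable) family so that McDiarmid's inequality applies — one typically reduces to a countable dense subclass or simply assumes the requisite measurability, as is standard. Since the statement is quoted verbatim as Theorem 4.10 of \cite{Rade_2}, an alternative is simply to cite it; the sketch above records the argument for completeness.
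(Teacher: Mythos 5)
Your proof is correct, and it is exactly the standard argument (McDiarmid's bounded-differences inequality with increments $2b/M$, giving the exponent $M\delta^2/(2b^2)$, followed by symmetrization with a ghost sample and Rademacher signs to bound $\mathbb{E}[Z]$ by $2R_M(\mathcal{F})$) behind the cited result. The paper does not reprove this theorem at all --- it simply quotes Theorem 4.10 of the reference --- so your sketch reconstructs the same proof used in that source, and your closing remark that citation alone suffices matches what the paper actually does.
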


Let us now define the function class
\begin{equation}\label{fct_class}
    \mathcal{F}_n:=\{|A\alpha-F|^2:\alpha\in\NN_n\},
\end{equation}
where $A$ and $F$ are define in \eqref{vec_form}. Then by Proposition \ref{matrix_thm}, we have that
\[
    \|A\alpha-F\|_{L^{\infty}(\Omega)}\leq\|A\alpha\|_{L^{\infty}(\Omega)}+\|F\|_{L^{\infty}(\Omega)}\lesssim \|\alpha\|_{L^{\infty}(\Omega)}+\|F\|_{L^{\infty}(\Omega)}.
\]
Therefore, by \eqref{bddness_NN} and \eqref{f_ass}, we observe that for all $n\in\mathbb{N}$, $\mathcal{F}_n$ is $\overline{G}$-uniformly bounded for some positive constant $\overline{G}>0$. Then the following lemma is the direct consequence of Theorem \ref{rade_thm}.
\begin{lemma}\label{rade_main}
Let $\{\om_m\}^M_{m=1}$ be i.i.d. samples randomly chosen from the distribution $\mathbb{P}_{\Omega}$ in the definition of the empirical loss function \eqref{emp_loss}. Then for any small $\delta>0$, we have with probability at least $1-2 \exp(-\frac{M\delta^2}{32 \overline{G}^2})$
\begin{equation}\label{loss_diff}
    \sup_{\alpha\in\NN_n}\left|\LL^M(\alpha)-\LL(\alpha)\right|\leq2R_n(\mathcal{F}_n)+\frac{\delta}{2}.
\end{equation}
\end{lemma}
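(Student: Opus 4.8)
The plan is to read Lemma~\ref{rade_main} as essentially a one-line specialization of the uniform law of large numbers recorded in Theorem~\ref{rade_thm}, applied to the function class $\mathcal{F}_n$ from \eqref{fct_class}. The first step is bookkeeping: by \eqref{obs_J} and \eqref{obs_JM}, and since $\{\om_m\}_{m=1}^{M}$ are i.i.d.\ from $\mathbb{P}_{\Omega}$, both losses are, up to a fixed normalization constant, a population and an empirical mean over $\mathcal{F}_n$. Explicitly,
\[
\sup_{\alpha\in\NN_n}\bigl|\LL^M(\alpha)-\LL(\alpha)\bigr|
\;=\; |\Omega|\,\sup_{g\in\mathcal{F}_n}\biggl|\frac{1}{M}\sum_{m=1}^{M} g(\om_m)-\mathbb{E}_{\om\sim\mathbb{P}_{\Omega}}[g(\om)]\biggr|,
\]
where $g=|A\alpha-F|^{2}$ ranges over $\mathcal{F}_n$ as $\alpha$ ranges over $\NN_n$ (here $|\Omega|$ is the Lebesgue volume of $\Omega=[a,b]^d$, which is $1$ under the paper's normalization of $\mathbb{P}_{\Omega}$, in which case it simply disappears).

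The second step is to feed this into Theorem~\ref{rade_thm}, whose hypothesis is already in hand: in the paragraph preceding the lemma it is shown, via Proposition~\ref{matrix_thm} together with \eqref{bddness_NN} and \eqref{f_ass}, that $\|g\|_{\infty}\le\overline{G}$ for every $g\in\mathcal{F}_n$ and every $n$, i.e.\ $\mathcal{F}_n$ is $\overline{G}$-uniformly bounded. Applying Theorem~\ref{rade_thm} with $\mathcal{F}=\mathcal{F}_n$, $b=\overline{G}$, and a free parameter $\delta'>0$ yields
\[
\sup_{g\in\mathcal{F}_n}\biggl|\frac{1}{M}\sum_{m=1}^{M} g(\om_m)-\mathbb{E}[g]\biggr|\;\le\; 2R_M(\mathcal{F}_n)+\delta'
\]
with probability at least $1-\exp\bigl(-M\delta'^{2}/(2\overline{G}^{2})\bigr)$. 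Combining with the identity of the first step, taking $\delta'$ to be the appropriate multiple of $\delta$, and using that the Rademacher complexity scales linearly so that the $|\Omega|$ prefactor can be absorbed by relabeling the right-hand side, one arrives at \eqref{loss_diff} (with the sample size $M$ as the subscript on $R$); the leading factor $2$ and the constant $32$ in the probability bound come out of that routine choice of $\delta'$ and, if one prefers to derive the two-sided deviation bound from a one-sided version, a union bound.

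I do not expect a genuine obstacle here. All the probabilistic substance — symmetrization and the bounded-differences concentration inequality — is packaged inside the cited Theorem~\ref{rade_thm}, and the only PDE-specific input, the $\overline{G}$-uniform boundedness of $\mathcal{F}_n$ (where Proposition~\ref{matrix_thm} and the regularity assumption \eqref{f_ass} enter), has already been established. The only points demanding care are cosmetic: correctly tracking the Lebesgue-volume constant $|\Omega|$ when passing between $\LL$, $\LL^M$ and the normalized means over $\mathcal{F}_n$, and tuning the free parameter so the advertised constants appear. The real work of the generalization-error analysis — estimating $R_M(\mathcal{F}_n)$ and showing it is $o(1)$ as $M\to\infty$ — is deferred; Lemma~\ref{rade_main} merely reduces the generalization error to that bound.
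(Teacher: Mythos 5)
Your proposal is correct and follows exactly the paper's route: the paper likewise treats the lemma as a direct consequence of Theorem \ref{rade_thm} applied to the class $\mathcal{F}_n$, whose $\overline{G}$-uniform boundedness was established just before via Proposition \ref{matrix_thm}, \eqref{bddness_NN} and \eqref{f_ass}, with the constants ($\delta/2$, the factor $2$, and the $32$ in the exponent) absorbing the routine rescaling of $\delta$ and the $|\Omega|$ normalization. You also correctly read $R_n(\mathcal{F}_n)$ in \eqref{loss_diff} as $R_M(\mathcal{F}_n)$, which is how it is used later in Theorem \ref{gen_conv_thm}.
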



With Lemma \ref{rade_main}, we now establish the convergence concerning $M\rightarrow\infty$ for the generalization error, which is encapsulated in the following theorem. Here we assume that the Rademacher complexity of $\mathcal{F}_n$ converges to zero for all $n\in\mathbb{N}$, which is the standard assumption as discussed above.
\begin{theorem}\label{gen_conv_thm}
Assume that \eqref{f_ass} holds, and suppose that $\lim_{M\rightarrow\infty}R_M(\mathcal{F}_n)=0$ for all $n\in\mathbb{N}$. Then we have with probability $1$ over i.i.d. samples that
\[
    \lim_{n\rightarrow\infty}\lim_{M\rightarrow\infty}\|\widehat{\alpha}(n,M)-\widehat{\alpha}(n)\|^2_2=0.
\]
\end{theorem}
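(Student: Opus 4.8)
The plan is to control $\|\widehat{\alpha}(n,M)-\widehat{\alpha}(n)\|_2^2$ through the loss functional, exactly in the spirit of the proof of Theorem \ref{approx_main_thm}, and then to show that the two loss values that appear vanish in the iterated limit. The starting observation is that, up to the spectral constants of $A$, the functional $\LL$ is equivalent to the squared $L^2(\Omega)$-distance to $\alpha^*$: since $A\alpha^*(\om)=F(\om)$ for every $\om$ by \eqref{LAS}, applying Proposition \ref{matrix_thm} with $x=\beta(\om)-\alpha^*(\om)$ and integrating over $\om$ gives
\[
\rho_{\min}^2\,\|\beta-\alpha^*\|_2^2\;\le\;\LL(\beta)\;\le\;\rho_{\max}^2\,\|\beta-\alpha^*\|_2^2 \qquad\text{for every }\beta\in C(\Omega,\R^{N-1}).
\]
Combining the lower bound with the elementary inequality $|a-b|^2\le 2|a-\alpha^*|^2+2|b-\alpha^*|^2$ applied to $\beta=\widehat{\alpha}(n,M)$ and $\beta=\widehat{\alpha}(n)$ yields
\[
\|\widehat{\alpha}(n,M)-\widehat{\alpha}(n)\|_2^2\;\le\;\frac{2}{\rho_{\min}^2}\Big(\LL(\widehat{\alpha}(n,M))+\LL(\widehat{\alpha}(n))\Big).
\]

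Next I would estimate $\LL(\widehat{\alpha}(n,M))$. Set $\varepsilon_{n,M}:=\sup_{\alpha\in\NN_n}|\LL^M(\alpha)-\LL(\alpha)|$. Since $\widehat{\alpha}(n,M)$ minimizes $\LL^M$ over $\NN_n$ while $\widehat{\alpha}(n)\in\NN_n$, a two-step comparison gives
\[
\LL(\widehat{\alpha}(n,M))\;\le\;\LL^M(\widehat{\alpha}(n,M))+\varepsilon_{n,M}\;\le\;\LL^M(\widehat{\alpha}(n))+\varepsilon_{n,M}\;\le\;\LL(\widehat{\alpha}(n))+2\varepsilon_{n,M};
\]
here it is essential that $\varepsilon_{n,M}$ is a \emph{uniform} bound over $\NN_n$, so that it may be applied at the sample-dependent (hence random) argument $\widehat{\alpha}(n,M)$. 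Substituting into the previous display,
\[
\|\widehat{\alpha}(n,M)-\widehat{\alpha}(n)\|_2^2\;\le\;\frac{4}{\rho_{\min}^2}\Big(\LL(\widehat{\alpha}(n))+\varepsilon_{n,M}\Big).
\]

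Finally I would pass to the limit. For fixed $n$ the term $\LL(\widehat{\alpha}(n))$ is independent of $M$, so the inner limit reduces to showing $\varepsilon_{n,M}\to 0$ almost surely as $M\to\infty$. Lemma \ref{rade_main} only asserts, for each fixed threshold $\delta>0$, that $\varepsilon_{n,M}\le 2R_M(\mathcal{F}_n)+\delta/2$ on an event of probability at least $1-2\exp(-M\delta^2/(32\overline{G}^2))$. To upgrade this to an almost-sure statement I would take a deterministic sequence $\delta_M\downarrow 0$ decaying slowly enough that $\sum_M\exp(-M\delta_M^2/(32\overline{G}^2))<\infty$ — for instance $\delta_M=M^{-1/4}$ — and invoke the Borel--Cantelli lemma: almost surely $\varepsilon_{n,M}\le 2R_M(\mathcal{F}_n)+\delta_M/2$ for all sufficiently large $M$, whence $\varepsilon_{n,M}\to 0$ by the hypothesis $R_M(\mathcal{F}_n)\to 0$. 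Intersecting the resulting probability-one events over the countably many $n\in\N$ preserves probability one. On this event, $\limsup_{M\to\infty}\|\widehat{\alpha}(n,M)-\widehat{\alpha}(n)\|_2^2\le \tfrac{4}{\rho_{\min}^2}\LL(\widehat{\alpha}(n))$ for every $n$, and letting $n\to\infty$ together with $\LL(\widehat{\alpha}(n))\to 0$ — established inside the proof of Theorem \ref{approx_main_thm} via the universal approximation property (Theorem \ref{UAT_main_thm}) — gives the claim.

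The main obstacle is this last step: converting the in-probability deviation estimate of Lemma \ref{rade_main} into almost-sure convergence requires choosing the threshold sequence $\delta_M$ at the correct rate and checking that the exceptional null set can be taken independent of $n$; everything else is bookkeeping with Proposition \ref{matrix_thm} and the minimizing properties of $\widehat{\alpha}(n)$ and $\widehat{\alpha}(n,M)$. One minor caveat: as written, the argument produces $\limsup_{n\to\infty}\limsup_{M\to\infty}\|\widehat{\alpha}(n,M)-\widehat{\alpha}(n)\|_2^2=0$, which, since the quantity is nonnegative, is the intended meaning of the stated iterated limit.
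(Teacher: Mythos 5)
Your proposal is correct and follows essentially the same route as the paper: bound $\|\widehat{\alpha}(n,M)-\widehat{\alpha}(n)\|_2^2$ by the losses via Proposition \ref{matrix_thm}, compare $\LL(\widehat{\alpha}(n,M))$ to $\LL(\widehat{\alpha}(n))$ through the uniform deviation bound of Lemma \ref{rade_main} together with the two minimality properties, and conclude with $\LL(\widehat{\alpha}(n))\to 0$ from the universal approximation argument of Theorem \ref{approx_main_thm}. The only notable difference is that you make the almost-sure upgrade explicit via Borel--Cantelli with a summable threshold sequence and intersect over countably many $n$, whereas the paper takes $\delta=2M^{-\frac{1}{2}+\varepsilon}$ in Lemma \ref{rade_main} and asserts the probability-one conclusion directly; your extra care is consistent with, and slightly sharpens, the paper's presentation.
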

\begin{proof}
By Proposition \ref{matrix_thm} and the definition \eqref{for_2}, we have
\begin{equation}
    \begin{aligned}\label{gen_pr_mid}
        \|\widehat{\alpha}(n)-\widehat{\alpha}(n,M)\|^2_2
        &\lesssim \|A\widehat{\alpha}(n)-A\widehat{\alpha}(n,M)\|^2_2\lesssim \|A\widehat{\alpha}(n)-F\|^2_2 + \|A\widehat{\alpha}(n,M)-F\|^2_2\\
        &=\LL(\widehat{\alpha}(n))+\LL(\widehat{\alpha}(n,M))\leq 2\LL(\widehat{\alpha}(n,M))
    \end{aligned}
\end{equation}
We shall now apply Lemma \ref{rade_main} with $\delta=2M^{-\frac{1}{2}+\varepsilon}$ for $0<\varepsilon<\frac{1}{2}$. Then with probability $1-2\exp(-\frac{M^{2\varepsilon}}{8\overline{G}^2})$, we have by minimality of $\widehat{\alpha}(n,M)$,
\[
    \LL(\widehat{\alpha}(n,M))
    \leq \LL^M(\widehat{\alpha}(n,M))+2R_M(\mathcal{F}_n)+M^{-\frac{1}{2}+\varepsilon}\leq \LL^M(\widehat{\alpha}(n))+2R_M(\mathcal{F}_n)+M^{-\frac{1}{2}+\varepsilon}
\]
If we apply Lemma \ref{rade_main} once more, we obtain
\[
    \LL(\widehat{\alpha}(n,M))\leq \LL(\widehat{\alpha}(n))+4R_M(\mathcal{F}_n)+2M^{-\frac{1}{2}+\varepsilon}.
\]
Therefore, if we take $M\rightarrow\infty$ on \eqref{gen_pr_mid}, we have by the assumption on $R_M(\mathcal{F}_n)$ that
\[
    \lim_{M\rightarrow\infty}\|\widehat{\alpha}(n,M)-\widehat{\alpha}(n)\|^2_2\lesssim \LL(\widehat{\alpha}(n)).
\]
As before, from Theorem \ref{UAT_main_thm}, we observe by taking $n\rightarrow\infty$ that
\[
    \lim_{n\rightarrow\infty}\lim_{M\rightarrow\infty}\|\widehat{\alpha}(n,M)-\widehat{\alpha}(n)\|^2_2\lesssim \lim_{n\rightarrow\infty}\LL(\widehat{\alpha}(n))\lesssim \lim_{n\rightarrow\infty}\inf_{\alpha\in\NN_n}\|\alpha-\alpha^*\|^2_2\rightarrow 0,
\]
and the proof is completed.
\end{proof}

\begin{remark}
The assumption $\lim_{M\rightarrow\infty}R_M(\mathcal{F}_n)=0$ in Theorem \ref{gen_conv_thm} is common in the statistical learning theory and known for several function classes \cite{rade_upper_1, rade_upper_2}. Moreover, in many cases, it was shown that the Rademacher complexity converges to zero as $M$ goes to infinity with the convergence rate $\mathcal{O}(M^{-\frac{1}{2}})$, which is same as the rate for the classical Monte Carlo integration; see, for example, two-layer ReLU networks \cite{rade_upper_1},  spectral Barron setting with the softplus activation function  \cite{rade_upper_3}, general Barron class \cite{rade_upper_2} and recent result on the complexity estimate for general case \cite{rade_upper_4}.
\end{remark}

\section{Application to linear second-order elliptic PDEs}\label{PDE_app}
In this section, we will derive the convergence of approximate solutions for the specific PDE problems based on what we have discussed in the previous sections. In general, one of the most challenging tasks for neural network-based PDE techniques is to impose the boundary condition. One typical way to handle this problem is to add penalty terms to loss functions. However, adding the penalty term to the loss function make it difficult to analyze the algorithm and may deteriorate the convergence.

As mentioned earlier, one important advantage of using the ULGNet is that we can impose exact boundary values to the approximate solutions. More precisely, once the boundary condition is given (either Dirichlet or Neumann), $\{a_k,b_k\}_{k=1}^{N-1}$ in \eqref{basis} are uniquely determined and hence, the basis function \eqref{basis} is fixed with the exact boundary conditions. Then we compute the stiffness matrix $S$ and the mass matrix $M$ using \eqref{vec_form}, and hence the total matrix $A$ is obtained accordingly. Then we can apply the abstract analysis studied in the previous section. One interesting thing is that the analysis in Section \ref{aca} can cover most of the self-adjoint PDEs. In the following subsection, we will deal with both the Dirichlet case and the Neumann case with detailed computation, to highlight the flexibility of the proposed method. As mentioned earlier, for the sake of simplicity we shall consider the self-adjoint equation
\begin{equation}\label{main_anal_eq}
    -u''(x;\om)+\nu u(x;\om) = f(x;\om)\quad{\rm{for}}\,\,x\in I:=(-1,1),\,\,\om\in\Omega,
\end{equation}
with the homogeneous boundary conditions
\begin{equation}\label{main_bc}
    u(-1;\om)=u(1;\om)=0\quad{\rm{or}}\quad u'(-1;\om)=u'(1;\om)=0\quad\forall\om\in\Omega.
\end{equation}
Note, however, that our analysis can be extended to the case of any self-adjoint PDEs and non-homogeneous boundary conditions in a straightforward manner. After that, we will also discuss the way to extend the analysis to the higher-dimensional case. Throughout the section, we will use the same notations, assumptions, and mathematical settings introduced in Section \ref{prelim}.

\subsection{Main theorem and convergence of approximate solutions}
For the input of neural networks, we shall assume that the forcing functions are generated with respect to the parameter $\om$ uniformly distributed over the $d$-dimensional unit cube, i.e. the external force is regarded as a bivariate function $f(x;\om)$ where the spatial variable $x$ is defined over $I$ and the parametric variable $\om$ is uniformly distributed over $[a,b]^d$ for $a,b \in \mathbb{R}^+$. Note that as the dimension $d$ increases, our method can cover a wider range of external force functions. As mentioned earlier, we assume that \eqref{f_ass} holds.

We begin with the following lemma, concerning the values of $a_k$ and $b_k$, which is a direct consequence of Lemma 4.1 in \cite{spectral_method}.

\begin{lemma}\label{coeff_val}
Let $L_k$ be the $k$-th Legendre polynomial. For each $k\geq0$, there is a unique set of $\{a_k,b_k\}^{N-1}_{k=1}$ such that $\phi_k(x)=L_k(x)+a_kL_{k+1}(x)+b_kL_{k+2}(x)$ satisfies the given boundary condition (either Dirichlet or Neumann). In particular,
\begin{itemize}
    \item homogeneous Dirichlet boundary condition $(\phi_k(-1)=\phi_k(1)=0$ for all $k\geq1)$:
    \begin{equation}\label{dd_coef}
        a_k=0\quad{\rm{and}}\quad b_k=-1\quad\forall k\geq1.    
    \end{equation}
    \item homogeneous Neumann boundary condition $(\phi'_k(-1)=\phi'_k(1)=0$ for all $k\geq1)$:
    \begin{equation}\label{nn_coef}
        a_k=0\quad{\rm{and}}\quad b_k=\frac{-k(k+1)}{(k+2)(k+3)}\quad\forall k\geq1.
    \end{equation}
\end{itemize}
\end{lemma}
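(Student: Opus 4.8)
The plan is to reduce both assertions to solving an explicit $2\times 2$ linear system for the pair $(a_k,b_k)$, obtained by imposing the prescribed boundary condition directly on the ansatz \eqref{basis}, and to read off existence and uniqueness from the nonvanishing of the associated $2\times 2$ determinant.

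First I would collect the standard endpoint data for Legendre polynomials, all of which follow from the three-term recurrence and the Sturm--Liouville identity already recorded in Section~\ref{ULGNet_intro}: namely $L_n(1)=1$, $L_n(-1)=(-1)^n$, and $L_n'(\pm1)=(\pm1)^{n+1}\tfrac12 n(n+1)$. For the Dirichlet case, substituting \eqref{basis} into $\phi_k(1)=0$ and $\phi_k(-1)=0$ and using these values gives the two relations $1+a_k+b_k=0$ and $(-1)^k(1-a_k+b_k)=0$; dividing the second by $(-1)^k$ produces a linear system in $(a_k,b_k)$ with determinant $2\neq0$, so the solution exists and is unique, and solving it yields $a_k=0$, $b_k=-1$, which is \eqref{dd_coef}.

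For the Neumann case I would proceed identically, now imposing $\phi_k'(1)=0$ and $\phi_k'(-1)=0$. Using the derivative values above and clearing the common factor $\tfrac12$, the first condition becomes $k(k+1)+a_k(k+1)(k+2)+b_k(k+2)(k+3)=0$, while the second, after factoring out the common sign $(-1)^k$, becomes $k(k+1)-a_k(k+1)(k+2)+b_k(k+2)(k+3)=0$. Subtracting the two equations forces $a_k=0$, since $(k+1)(k+2)\neq0$ for $k\ge1$, and substituting back gives $b_k=-\tfrac{k(k+1)}{(k+2)(k+3)}$, which is \eqref{nn_coef}; equivalently, the coefficient matrix of this system has determinant $2(k+1)(k+2)^2(k+3)\neq0$ for $k\ge1$, so again the solution exists and is unique.

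There is no substantive obstacle here; the only point requiring care is the sign bookkeeping at the left endpoint $x=-1$, where the factors $(-1)^k$, $(-1)^{k+1}$, $(-1)^{k+2}$ must be tracked consistently, together with applying the endpoint derivative formula for $L_n$ with the correct normalization. Once these are handled, the symmetry between the two endpoint equations makes the computation essentially immediate. Alternatively, as noted in the statement, the formulas \eqref{dd_coef} and \eqref{nn_coef} follow directly from Lemma~4.1 of \cite{spectral_method}, so one could simply cite that result.
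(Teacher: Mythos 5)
Your proof is correct. Note that the paper itself does not carry out this computation: Lemma \ref{coeff_val} is stated as a direct consequence of Lemma 4.1 in \cite{spectral_method}, so the ``proof'' in the paper is simply a citation of the known result on compact combinations of Legendre polynomials. What you provide instead is a self-contained elementary verification: the endpoint data $L_n(\pm1)=(\pm1)^n$ and $L_n'(\pm1)=(\pm1)^{n+1}\tfrac12 n(n+1)$ (the latter obtained by evaluating the Sturm--Liouville identity at $x=\pm1$, which is legitimate since that identity is recorded in Section \ref{ULGNet_intro}), followed by solving the resulting $2\times2$ systems. Your sign bookkeeping at $x=-1$ is right in both cases, the subtraction argument forcing $a_k=0$ is valid, the resulting values $b_k=-1$ and $b_k=-\tfrac{k(k+1)}{(k+2)(k+3)}$ match \eqref{dd_coef} and \eqref{nn_coef}, and the determinants $2$ and $2(k+1)(k+2)^2(k+3)$ are computed correctly and are nonzero, which settles existence and uniqueness. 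The trade-off is the usual one: the citation route is shorter and defers to a standard reference, while your computation makes the lemma verifiable within the paper and makes explicit exactly where uniqueness comes from; either is acceptable here.
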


We will also use the following lemma, regarding the structure of the stiffness and the mass matrices, again quoted from \cite{spectral_method}.

\begin{lemma}\label{matrix_struc}
The stiffness matrix $S$ defined in \eqref{vec_form} is a diagonal matrix with the diagonal entries
\begin{equation}\label{stiff_mat}
    S_{kk}=-(4k+6)b_k\quad\forall k\geq1.
\end{equation}
Furthermore, the mass matrix $M$ is a symmetric penta-diagonal matrix with the following non-zero elements:
\begin{equation}\label{mass_mat}
    M_{jk}=M_{kj}=
    \begin{cases}
    \frac{2}{2k+1}+\frac{2a_k^2}{2k+3}+\frac{2b^2_k}{2k+5},\quad&j=k,\\
    \frac{2a_k}{2k+3}+\frac{2a_{k+1}b_k}{2k+5},\quad&j=k+1,\\
    \frac{2b_k}{2k+5},\quad&j=k+2.
    \end{cases}
\end{equation}
\end{lemma}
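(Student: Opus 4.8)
The plan is to derive both identities directly from the defining combination $\phi_k = L_k + a_k L_{k+1} + b_k L_{k+2}$ together with two classical facts about Legendre polynomials. The first is the $L^2$-orthogonality $\int_I L_m(x)L_n(x)\dx = \frac{2}{2n+1}\delta_{mn}$. The second is the derivative recurrence $(2n+1)L_n = L'_{n+1} - L'_{n-1}$, which telescopes to the finite expansion $L'_n = \sum_i (2i+1) L_i$, the sum being over $0 \le i \le n-1$ with $i \equiv n-1 \pmod 2$. I also use Lemma \ref{coeff_val}, which gives $a_k = 0$ in both the Dirichlet and Neumann cases, so that effectively $\phi_k = L_k + b_k L_{k+2}$; this is precisely what produces the penta-diagonal (in fact tri-diagonal) pattern.

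For the mass matrix, I would expand $M_{jk} = \int_I \phi_j \phi_k \dx$ by bilinearity into a sum of integrals $\int_I L_{j+p} L_{k+q}\dx$ with $p,q \in \{0,1,2\}$ and coefficients built from $1, a_k, b_k$. Orthogonality annihilates every term unless $j+p = k+q$, which is impossible when $|j-k| > 2$; this immediately yields the penta-diagonal structure. Reading off the surviving cases $j = k$, $j = k+1$, $j = k+2$ from orthogonality then reproduces exactly \eqref{mass_mat}.

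For the stiffness matrix, I would expand $S_{jk} = \int_I \phi'_j \phi'_k \dx$ in the same way, now into integrals $\int_I L'_{j+p} L'_{k+q}\dx$. Using the derivative expansion above one checks that $\int_I L'_m L'_n \dx = \min\{m,n\}(\min\{m,n\}+1)$ when $m \equiv n \pmod 2$ and $0$ otherwise — the vanishing because $L'_m$ and $L'_n$ then share no common Legendre mode, and the value because $\sum_i 2(2i+1) = m(m+1)$ over the relevant arithmetic progression. Substituting $a_k = 0$ and the explicit $b_k$ from Lemma \ref{coeff_val}, the off-diagonal entries (for which $|j-k| \ge 2$, forced by parity) consist of two nonzero contributions that cancel exactly: for Dirichlet because $1 + b_k = 0$, and for Neumann because $j(j+1) + b_j(j+2)(j+3) = 0$. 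Hence $S$ is diagonal, and the diagonal entry collapses to $k(k+1)(1 + 2b_k) + b_k^2 (k+2)(k+3)$, which one verifies equals $-(4k+6)b_k$, i.e. \eqref{stiff_mat}, in both cases.

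The only step requiring genuine care is the derivative identity $\int_I L'_m L'_n \dx = \min\{m,n\}(\min\{m,n\}+1)$ for $m \equiv n \pmod 2$, which needs the derivative recurrence together with an arithmetic-sum computation; the rest is routine bookkeeping with orthogonality. An alternative that avoids the derivative expansion is to integrate by parts, $S_{jk} = -\int_I \phi''_j \phi_k \dx$ (the boundary terms vanishing since $\phi_k(\pm 1) = 0$ for Dirichlet and $\phi'_k(\pm 1) = 0$ for Neumann), and then invoke the Sturm--Liouville relation $((1-x^2)L'_n)' + n(n+1)L_n = 0$ together with orthogonality; the direct route is preferable, though, because it exhibits the exact cancellations underlying \eqref{stiff_mat} transparently.
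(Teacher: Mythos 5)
Your derivation is correct, and it is worth noting that the paper itself does not prove this lemma at all: the statement is quoted directly from \cite{spectral_method}, so what you have written is a self-contained substitute for that citation rather than a variant of an argument in the text. Your ingredients (the orthogonality $\int_I L_mL_n\dx=\tfrac{2}{2n+1}\delta_{mn}$, the derivative expansion $L_n'=\sum(2i+1)L_i$ giving $\int_I L_m'L_n'\dx=\min\{m,n\}(\min\{m,n\}+1)$ for $m\equiv n\pmod 2$ and $0$ otherwise, and the boundary-condition-specific coefficients from Lemma \ref{coeff_val}) do yield exactly \eqref{stiff_mat}--\eqref{mass_mat}; I checked the diagonal reduction $k(k+1)(1+2b_k)+b_k^2(k+2)(k+3)=-(4k+6)b_k$ in both the Dirichlet and Neumann cases, and it holds. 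Two bookkeeping remarks. First, for the mass matrix you do not need $a_k=0$: orthogonality alone gives the penta-diagonal structure and the entries \eqref{mass_mat} for arbitrary $a_k,b_k$, and keeping $a_k$ general is precisely what makes the $j=k+1$ entry in \eqref{mass_mat} meaningful. Second, for the stiffness matrix the off-diagonal vanishing should be checked for every $j>k$ with $j\equiv k\pmod 2$ (gaps $4,6,\dots$ as well as $2$); your $\min$-formula handles this uniformly, since all four cross terms combine into the factorization $S_{jk}=(1+b_j)\left[k(k+1)+b_k(k+2)(k+3)\right]$, which vanishes for Dirichlet because $b_j=-1$ and for Neumann because $k(k+1)+b_k(k+2)(k+3)=0$ --- this is the cancellation you describe, just with four terms rather than two. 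For the Dirichlet case there is an even shorter route, which is essentially how the cited reference proceeds: $\phi_k'=L_k'-L_{k+2}'=-(2k+3)L_{k+1}$, so $S_{jk}=(2j+3)(2k+3)\int_I L_{j+1}L_{k+1}\dx=(4k+6)\delta_{jk}=-(4k+6)b_k\,\delta_{jk}$ immediately; your integration-by-parts alternative with the Sturm--Liouville relation is also legitimate, since the boundary terms vanish under either boundary condition.
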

Note from Lemma \ref{coeff_val} and Lemma \ref{matrix_struc} that all entries of the stiffness and mass matrices are finite, since the basis functions we used are continuous over the compact interval  $[-1,1]$, and hence the integrals in \eqref{vec_form} are well-defined.

Let us write the stiffness and the mass matrices as $S_{\rm{Dir}}$, $M_{\rm{Dir}}$ for the Dirichlet case computed by \eqref{dd_coef}, \eqref{stiff_mat} and \eqref{mass_mat}, and define $A_{\rm{Dir}}=S_{\rm{Dir}}+\nu M_{\rm{Dir}}$. The matrices for the Neumann case $S_{\rm{Neu}}$, $M_{\rm{Neu}}$ and $A_{\rm{Neu}}$ are similarly obtained by using \eqref{nn_coef}, \eqref{stiff_mat} and \eqref{mass_mat}. From now on, $A_{\rm{BC}}$ means either $A_{\rm{Dir}}$ or $A_{\rm{Neu}}$ and the matrix denoted by $A$ in the previous sections is now considered to be $A_{\rm{BC}}$. By Lemma \ref{matrix_struc}, $A_{\rm{BC}}$ is symmetric. Furthermore for any non-zero $y=(y_1,\cdots,y_{N-1})\in\R^{N-1}$,
\begin{equation}\label{pos_def}
    y\cdot A_{\rm{BC}}y
    =\sum^{N-1}_{i=1}\sum^{N-1}_{j=1}y_iy_j\int_I\left(\phi'_i(x)\phi'_j(x)+\nu\phi_i(x)\phi_j(x)\right)\dx\geq\int_I\sum^{N-1}_{i=1}y_i^2|\phi'_i(x)|^2\dx=\sum^{N-1}_{i=1}y_i^2S_{ii}>0,
\end{equation}
which means that $A_{\rm{BC}}$ is positive-definite, and hence is non-singular. Therefore, we can apply the analysis conducted in Section \ref{aca}. We now consider that the loss functions $\LL$ and $\LL^M$ (\eqref{obs_J} and \eqref{obs_JM} respectively) where the matrix $A$ is now replaced by $A_{\rm{BC}}$, and the corresponding coefficients defined through \eqref{for_1}, \eqref{for_2} and \eqref{for_4}. Then Theorem \ref{approx_main_thm} and Theorem \ref{gen_conv_thm} together with the triangular inequality give us that
\begin{equation}\label{conv_1}
    \lim_{n\rightarrow\infty}
    \lim_{M\rightarrow\infty}\|\alpha^*-\widehat{\alpha}(n,M)\|_{L^2(\Omega)}=0.
\end{equation}

Now we shall state and prove the main theorem of this paper. Recall that the definition of approximate solutions \eqref{for_0}, \eqref{for_3} and \eqref{for_5}.
\begin{theorem}\label{main_thm_whole}
Assume that \eqref{f_ass} holds, and suppose that for any $n\in\mathbb{N}$, $R_M(\widetilde{{\mathcal{F}}}_n)\rightarrow0$ as $M\rightarrow \infty$, where $\widetilde{{\mathcal{F}}}_n:=\{|A_{\rm{BC}}\alpha-F|^2:\alpha\in\NN_n\}$. Then for fixed $N\in\mathbb{N}$, we have with probability $1$ over i.i.d. samples that
\begin{equation}\label{main_conv_whole}
    \lim_{n\rightarrow \infty}\lim_{M\rightarrow \infty}\|u_N-u_{N,n,M}\|_{L^2(\Omega;H^1(I))}=0.
\end{equation}
\end{theorem}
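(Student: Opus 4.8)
The plan is to reduce the Bochner-norm convergence \eqref{main_conv_whole} to the coefficient convergence \eqref{conv_1}, which has already been obtained by combining Theorem \ref{approx_main_thm} and Theorem \ref{gen_conv_thm} (the hypothesis $R_M(\widetilde{\mathcal{F}}_n)\to 0$ being precisely what Theorem \ref{gen_conv_thm} requires with $A=A_{\rm BC}$). First I would write the difference of the two approximate solutions explicitly as a Legendre--Galerkin expansion: from \eqref{for_0} and \eqref{for_5},
\[
u_N(x;\om) - u_{N,n,M}(x;\om) = \sum_{k=1}^{N-1} \big(\alpha^*_k(\om) - \widehat{\alpha}(n,M)_k(\om)\big)\phi_k(x).
\]
Setting $c(\om) = \alpha^*(\om) - \widehat{\alpha}(n,M)(\om)\in\R^{N-1}$ and expanding the squared $H^1(I)$-norm in the spatial variable, one gets
\[
\|u_N(\cdot;\om) - u_{N,n,M}(\cdot;\om)\|_{H^1(I)}^2 = \sum_{j,k} c_j(\om)c_k(\om)\Big(\int_I \phi_j'\phi_k'\dx + \int_I \phi_j\phi_k\dx\Big) = c(\om)^{\top}(S + M)\,c(\om),
\]
where $S$, $M$ are the stiffness and mass matrices of \eqref{vec_form} associated with the prescribed boundary condition.

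Next I would observe that $S+M$ is symmetric and positive definite: $S$ is diagonal with strictly positive entries by Lemma \ref{matrix_struc} (using $b_k<0$ in both \eqref{dd_coef} and \eqref{nn_coef}), while $M$ is the Gram matrix of the linearly independent functions $\phi_1,\dots,\phi_{N-1}$ and hence positive semidefinite; alternatively, this is exactly the computation \eqref{pos_def} with $\nu$ replaced by $1$. Since $N$ is held fixed, $\rho_{\max}(S+M)$ is a fixed finite constant, so Proposition \ref{matrix_thm} applied to $S+M$ yields $c^{\top}(S+M)c\le \rho_{\max}(S+M)\,|c|^2\lesssim |c|^2$, with implicit constant depending only on $N$ and the basis. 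Therefore
\[
\|u_N(\cdot;\om) - u_{N,n,M}(\cdot;\om)\|_{H^1(I)}^2 \lesssim |\alpha^*(\om) - \widehat{\alpha}(n,M)(\om)|^2 \qquad \text{for every } \om\in\Omega.
\]

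Finally I would integrate this pointwise bound over $\Omega$, obtaining
\[
\|u_N - u_{N,n,M}\|_{L^2(\Omega;H^1(I))}^2 = \int_\Omega \|u_N(\cdot;\om) - u_{N,n,M}(\cdot;\om)\|_{H^1(I)}^2 \dom \lesssim \|\alpha^* - \widehat{\alpha}(n,M)\|_{L^2(\Omega)}^2,
\]
and then invoke \eqref{conv_1} to conclude that the right-hand side tends to $0$ in the iterated sense $\lim_{n\to\infty}\lim_{M\to\infty}$, with probability $1$ over the i.i.d.\ samples, which gives \eqref{main_conv_whole}.

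As for the main difficulty: there is essentially no deep obstacle once \eqref{conv_1} is available; the only point requiring care is that the norm equivalence constant between $\|\sum_k c_k\phi_k\|_{H^1(I)}$ and $|c|$ must be controlled, which is exactly why $N$ is fixed throughout, and that positive-definiteness of $S+M$ must be checked for both boundary conditions. If one instead wanted an explicit rate, or wanted to let $N\to\infty$ jointly with $n,M$, the real work would be to track the $N$-dependence of $\rho_{\max}(S+M)$ and of the constants hidden in \eqref{conv_1} (which ultimately stem from $\rho_{\min}(A_{\rm BC})$ and the universal approximation step); this is outside the scope of the present statement.
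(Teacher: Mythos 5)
Your proposal is correct and follows essentially the same route as the paper: both reduce \eqref{main_conv_whole} to the coefficient convergence \eqref{conv_1} by expanding $u_N-u_{N,n,M}$ in the basis $\{\phi_k\}$ and bounding the resulting quadratic form in $\alpha^*-\widehat{\alpha}(n,M)$ uniformly in $\om$, using that $N$ is fixed. The only (harmless) difference is in how that quadratic form is bounded: you invoke the spectral bound of Proposition \ref{matrix_thm} for the symmetric positive-definite matrix $S+M$, whereas the paper exploits the explicit sparsity from Lemma \ref{matrix_struc} (diagonal stiffness matrix for the seminorm part, penta-diagonal mass matrix plus Young's inequality for the $L^2$ part), which amounts to the same estimate.
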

\begin{proof}
Let us first prove the convergence with respect to the $H^1$-seminorm. By the orthogonality of $\{\phi'_k\}_{k=1}^{N-1}$ (Lemma \ref{coeff_val}, Lemma \ref{matrix_struc}), and Theorem \ref{approx_main_thm}, Theorem \ref{gen_conv_thm}, we note that
\begin{align*}
    \int_{\Omega}\int_{I}|u'_N-u'_{N,n,M}|^2\dx\dom
    &=\int_{\Omega}\int_I\bigg|\sum^{N-1}_{k=1}\left(\alpha^*_k-\widehat{\alpha}(n,M)_k\right)\phi'_k\bigg|^2\dx\dom\\
    &=\int_{\Omega}\sum^{N-1}_{k=1}|\alpha^*_k-\widehat{\alpha}(n,M)_k|^2S_{kk}\dom\\
    &\lesssim\|\alpha^*-\widehat{\alpha}(n,M)\|^2_{L^2(\Omega)}\rightarrow0\quad{\rm{as}}\,\,M,n\rightarrow\infty.
\end{align*}
For the Dirichlet case, the full $H^1$-convergence easily follows by the use of Poincar\'e's inequality. Here, however, to cover the Neumann case as well, we present a more general argument. From the structure of the mass matrix (Lemma \ref{coeff_val}, Lemma \ref{matrix_struc}) and Young's inequality, we obtain that
\begin{align*}
    \int_{\Omega}\int_{I}|u_N-u_{N,n,M}|^2\dx\dom
    &=\int_{\Omega}\int_I\bigg|\sum^{N-1}_{k=1}\left(\alpha^*_k-\widehat{\alpha}(n,M)_k\right)\phi_k\bigg|^2\dx\dom\\
    &=\int_{\Omega}\sum^{N-1}_{k=1}|\alpha^*_k-\widehat{\alpha}(n,M)_k|^2M_{kk}\dom\\
    &\,\,\,\,\,\,\,+2\int_{\Omega}\sum^{N-3}_{j=1}|\alpha^*_j-\widehat{\alpha}(n,M)_j||\alpha^*_{j+2}-\widehat{\alpha}(n,M)_{j+2}|M_{j\,j+2}\dom\\
    &\lesssim\int_{\Omega}\sum^{N-1}_{k=1}|\alpha^*_k-\widehat{\alpha}(n,M)_k|^2\dom+\int_{\Omega}\sum^{N-3}_{j=1}|\alpha^*_j-\widehat{\alpha}(n,M)_j|^2\dom\\
    &\,\,\,\,\,\,\,+\int_{\Omega}\sum^{N-3}_{j=1}|\alpha^*_{j+2}-\widehat{\alpha}(n,M)_{j+2}|^2\dom\\
    &\lesssim\|\alpha^*-\widehat{\alpha}(n,M)\|^2_{L^2(\Omega)}\rightarrow0\quad{\rm{as}}\,\,M,n\rightarrow\infty,
\end{align*}
where we have used once more Theorem \ref{approx_main_thm} and Theorem \eqref{gen_conv_thm}.
\end{proof}


\subsection{Analysis in higher dimensions}\label{sec:multi_dim}
In this section, we shall discuss the way to extend the ULGNet scheme and its analysis to the multi-dimensional case for the spatial variable. Here we illustrate the two-dimensional case with homogeneous Dirichlet boundary condition, and the study for general-dimensional cases follows in a straightforward manner.

Let us consider the following two-dimensional problem:
\begin{align}
    -\Delta u + \nu u&=f\quad{\rm{in}}\,\,I^2=[-1,1]^2,\label{2d_eq_1}\\
    u&=0\quad{\rm{on}}\,\,\partial I^2.\label{2d_eq_2}
\end{align}
Here $f$ is a two-dimensional function, again parametrized by $\om\in\Omega$. The weak solution of the equation \eqref{2d_eq_1}-\eqref{2d_eq_2} is the function $u\in H^1_0$ satisfying the weak formulation
\begin{equation}\label{2d_WF}
    \int_{I^2}\left(\nabla u(x,y)\cdot\nabla v(x,y)+\nu u(x,y)v(x,y)\right)\dx\dy=\int_{I^2}f(x,y)v(x,y)\dx\dy,\quad\forall v\in H^1_0.
\end{equation}
For the Legendre--Galerkin approximation of \eqref{2d_eq_1}-\eqref{2d_eq_2}, we define the two-dimensional basis function by
\begin{equation}\label{2d_basis}
    \phi_{i,j}(x,y)=\phi_i(x)\phi_j(y)\quad i,j\in\{1,\cdots,N-1\},
\end{equation}
where $\phi_i(x)$ and $\phi_j(y)$ are one-dimensional basis functions defined in \eqref{basis}. Note that $(a_k,b_k)$'s are determined with the same values computed in \eqref{dd_coef}, since $\phi_{i,j}(x,y)$ is reduced to the single-variable basis function if we fix one of $x$ or $y$, which needs to satisfy the one-dimensional homogeneous Dirichlet boundary condition as before.

We shall rewrite $\phi_{i,j}(x,y)$, $i,j\in\{1,\cdots,N-1\}$ as $\overline{\phi}_k(x,y)$, $k\in\{1,\cdots,(N-1)^2\}$, by allocating the multi indices $\{(1,1),\cdots,(1,N-1),(2,1),\cdots,(2,N-1),\cdots,(N-1,1),\cdots,(N-1,N-1)\}$ to $\{1,\cdots,(N-1)^2\}$. As before, for $i$, $j\in\{1,\cdots,(N-1)^2\}$, we define the stiffness matrix, the mass matrix and the load vector by
\begin{equation}\label{2d_mats}
    \begin{aligned}
        \overline{S}&=(\overline{S}_{ij})&&{\rm{where}}\,\,\overline{S}_{ij}:=\int_{I^2}\overline{\phi}'_i(x,y)\overline{\phi}'_j(x,y)\dx\dy,\\
        \overline{M}&=(\overline{M}_{ij})&&{\rm{where}}\,\,\overline{M}_{ij}:=\int_{I^2}\overline{\phi}_i(x,y)\overline{\phi}_j(x,y)\dx\dy,\\
        \overline{F}&=(\overline{F}_{j})&&{\rm{where}}\,\,\overline{F}_{j}:=\int_{I^2}f(x,y)\overline{\phi}'_j(x,y)\dx\dy,
    \end{aligned}
\end{equation}
and we let $\overline{A}=\overline{S}+\nu\overline{M}$. Then we define the loss functions
\begin{equation}\label{2d_loss_fct}
\overline{\LL}(\alpha)=\|\overline{A}\alpha(\om)-\overline{F}(\om)\|^2_{L^2(\Omega)}\quad{\rm{and}}\quad\overline{\LL}^M(\alpha)=\frac{|\Omega|}{M}\sum^M_{m=1}|\overline{A}\alpha(\om_m)-\overline{F}(\om_m)|^2, 
\end{equation}
and as before, we further define the Legendre--Galerkin approximation
\begin{equation}
    \overline{\alpha}^*=\argmin_{\alpha\in C(\Omega,\R^{(N-1)^2})}\overline{\LL}(\alpha),\quad\overline{u}_N=\sum^{(N-1)^2}_{k=1}\overline{\alpha}^*_k\overline{\phi}_k(x),
\end{equation}
the continuous residual minimization
\begin{equation}
    \overline{{\alpha}}(n)=\argmin_{\alpha\in \overline{N}_n}\overline{\LL}(\alpha),\quad\overline{u}_{N,n}=\sum^{(N-1)^2}_{k=1}\overline{{\alpha}}(n)_k\overline{\phi}_k(x),
\end{equation}
and the discrete residual minimization
\begin{equation}
  \overline{{\alpha}}(n,M)=\argmin_{\alpha\in \overline{N}_n}\overline{\LL}^M(\alpha),\quad\overline{u}_{N,n,M}=\sum^{(N-1)^2}_{k=1}\overline{{\alpha}}(n,M)_k\overline{\phi}_k(x),\\  
\end{equation}
where $\overline{N}_n$ consists of neural networks with $(N-1)^2$-dimensional output.

Next, by the definition, $\overline{A}$ is symmetric. Moreover, by the same argument used in \eqref{pos_def}, $\overline{A}$ is positive-definite, and hence non-singular. Therefore, we can apply Proposition \ref{matrix_struc}, from which we can deduce the convergence
\[
\lim_{n\rightarrow\infty}\lim_{M\rightarrow\infty}\|\overline{\alpha}^*-\overline{\alpha}(n,M)\|^2_{L^2(\Omega)}=0
\]
with the same arguments proposed in Theorem \ref{approx_main_thm} and Theorem \ref{gen_conv_thm} with $A=\overline{A}$.
Again, by the same argument presented in Theorem \ref{main_thm_whole} with $A_{\rm{BC}}$ replaced by $\overline{A}$, we conclude that
\begin{equation}\label{2d_main_conv_whole}
    \lim_{n\rightarrow \infty}\lim_{M\rightarrow \infty}\|\overline{u}_N-\overline{u}_{N,n,M}\|_{L^2(\Omega;H^1(I))}=0.
\end{equation}

\section{Numerical experiments}\label{num_exp}
In this section, we shall demonstrate some numerical results to verify our theoretical findings presented in the previous sections. We consider the equation \eqref{main_eq} with $\nu=1$, and impose the homogeneous Dirichlet boundary condition on the interval $[-1,1]$:
\begin{align}
    -u''+u&=f\quad{\rm{in}}\,\,(-1,1),\label{exp_eq_1}\\
    u(-1)&=u(1)=0.\label{exp_eq_2}
\end{align}
For the external forcing term $f$, we randomly generate $M$ samples of the training data of the ULGNet which is given by 
\begin{equation}\label{f_exp}
    f(x;\om_m)=\om^1_m\sin(2\pi\om^2_mx)+\om^3_m\cos(2\pi\om^4_mx),\quad1\leq m\leq M,
\end{equation}
where the random sample $\om_m$ is selected from the uniform distribution
\[{\rm{i.i.d.}}\quad\om_m=(\om^1_m,\om^2_m,\om^3_m,\om^4_m)\sim[0,1]^4=:\Omega,\quad1\leq m\leq M.\]

For the spectral element approximation of the equations \eqref{exp_eq_1}-\eqref{exp_eq_2}, as presented in Lemma \ref{coeff_val} and Lemma \ref{matrix_struc}, we will utilize the basis function \eqref{basis} and hence the matrices \eqref{stiff_mat}, \eqref{mass_mat} with
\[a_k=0\quad{\rm{and}}\quad b_k=1\quad\forall k\geq1.\]
In our experiments, we set
N = 32 and use the Legendre-Gauss-Lobatto nodal points for numerical integrations. Then we solve the algebraic equation \eqref{LAS} to compute $u_N$ which will be regarded as true solutions in the present experiment. For the approximate solutions using the ULGNet, we minimize the empirical loss function defined by \eqref{emp_loss} to compute the coefficients $\widehat{\alpha}(n,M)$ and construct the predicted solution $U_{N,n,M}$ as \eqref{for_5}. We then measure the distance
\[\|u_N-u_{N,n,M}\|_{L^2(\Omega;L^2(I))}\]
by using the Monte--Carlo simulation with respect to $\Omega$ with $1000$ random samples. In this setting, we shall conduct the numerical experiments with two different scenarios. One is to grow the neural network architecture with fixed $M\in \mathbb{N}$, and the other one is to fix the neural network architecture and to increase the number of training samples $M\in\mathbb{N}$.

More precisely, for the first experiment, we consider the two-layer neural networks with $n\in\mathbb{N}$ neurons in the hidden layer, and use the hyperbolic tangent activation function which satisfies the boundedness condition assumed in Theorem \ref{UAT_main_thm}. For the training, We fix the number of training data $M=10000$ and adopt the limited-memory Broyden--Fletcher--Goldfarb--Shanno (L-BFGS) optimization algorithm as in \cite{optim}, with $20000$ training epochs. Then for each $n\in\mathbb{N}$ varying from $20$ to $300$, we train the model as described above and compute the relative $L^2$-error between $u_N$ and $U_{N,n,M}$ evaluated over $1000$ randomly generated functions of the form \eqref{f_exp} that were not used in the training.

On the other hand, for the second experiment, we adopt a fully-connected neural networks with five hidden layers with $128$ neurons for each layer, again equipped with the hyperbolic tangent activation function. We fix this neural network architecture and train the models with the same method described above with the different number of training data $M\in\mathbb{N}$ for each model. In both experiments, all the computations were carried on by Python software on the GPU cluster in the Department of Mathematics, Sugnkyunkwan University. The results of the experiments are presented in Figure \ref{ULGNet_num_exp}.

\begin{figure}
\centering
\subfloat[Relative $L^2$-error as $n$ increases. ]{{\includegraphics[width=0.45\textwidth ]{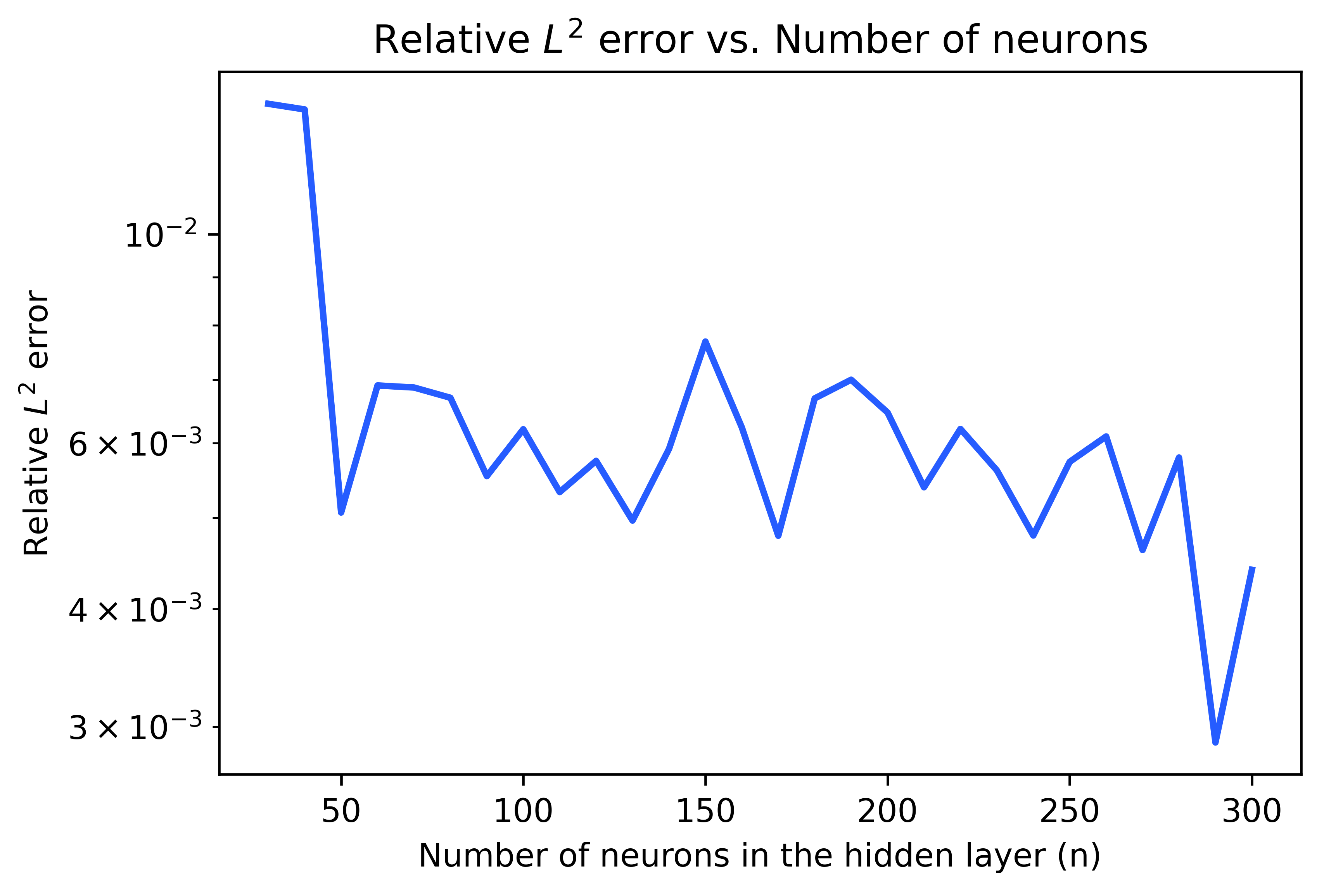} }}%
\subfloat[Relative $L^2$-error as $M$ increases.]{{\includegraphics[width=0.435\textwidth ]{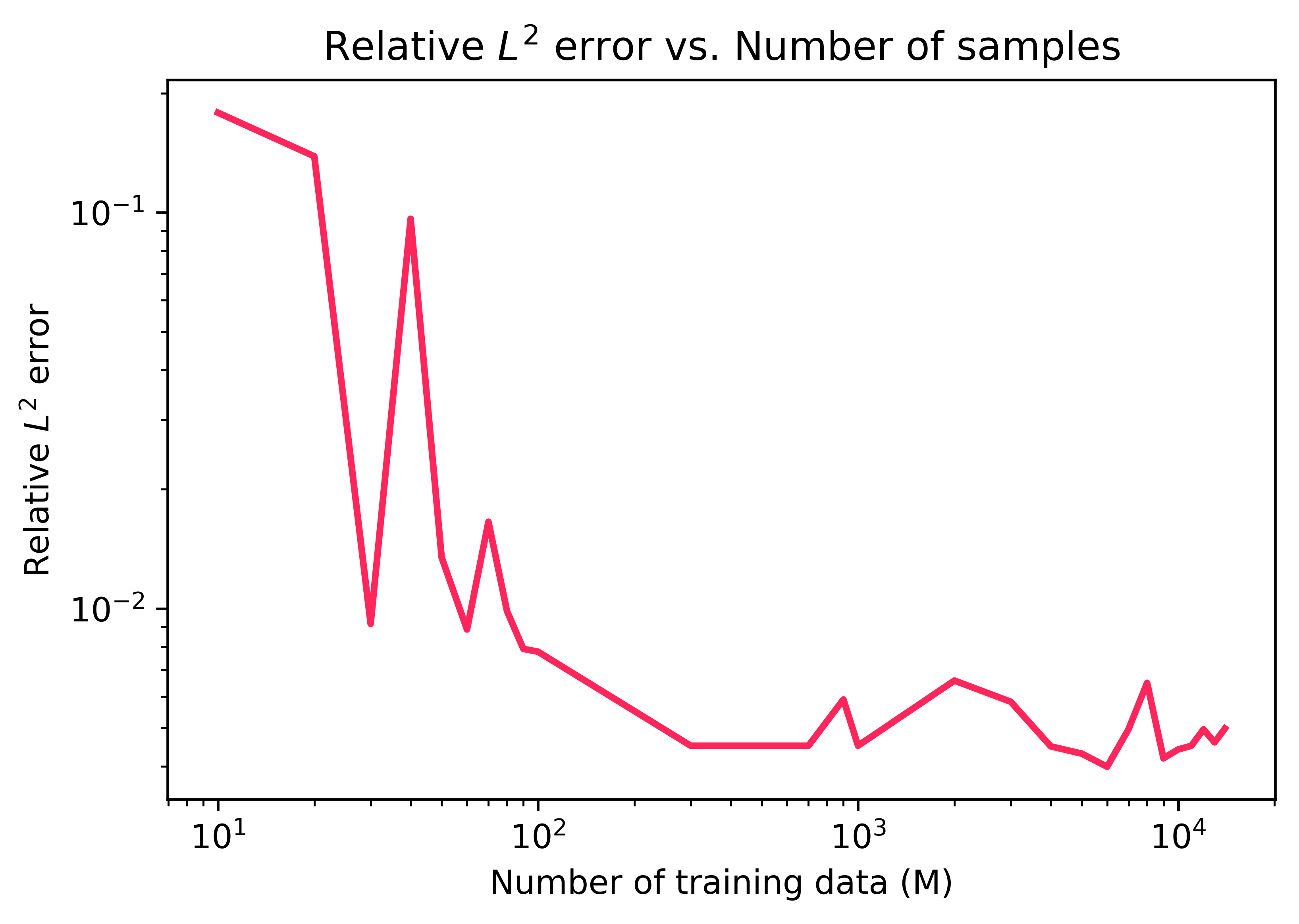} }}%
\caption{Relative $L^2$-error evaluated over $1000$ randomly generated samples as $n$, $M$ increase.}%
\label{ULGNet_num_exp}%
\end{figure}
As shown in Figure \ref{ULGNet_num_exp}, we can say that the relative $L^2$ error tends to decrease as $n$ and $M$ increase. Since there are a number of factors in training machine learning algorithms which may 
hold us back from obtaining precise values (e.g. there is randomness for each realization of $\om_m$ and optimization errors may occur), the trend of errors in the figure is not expressed in a deterministic manner as the classical numerical methods for PDEs. However, it can be clearly confirmed that from the figure that the error tends to decrease in both cases, which supports what we theoretically proved in previous sections.

\section{Conclusion}\label{con_rmk}
In this paper, we have established the convergence of approximate solutions obtained by the ULGNet algorithm to the weak solution of the linear, second-order elliptic PDEs. The idea of ULGNet is to represent solutions as a linear combination of the Legendre--Galerkin basis functions and generate the coefficients through the deep neural network whose input is an external forcing term, where the loss function is defined via the variational residual minimization. The key observation for the proof is to convert the variational minimization problem to the strong residual minimization solving matrix equation, which is similar to the framework of classical numerical schemes. Then we perform the analysis for the derived matrix, enabled by the choice of our Legendre-Galerkin basis functions. As a consequence, together with the universal approximation property of neural networks and the convergence of Rademacher complexity, we showed the strong convergence of the minmizers to the solutions of the given PDEs.

We expect that the numerical scheme and the analysis studied in the present paper have considerable potential to be further investigated in various
directions. As the well-known basis functions are used for the scheme, from the analysis point of view,
it is more likely for us to apply various theories from classical numerical analysis. One interesting future research direction is to use different basis functions which might lead us to achieve better computational cost and numerical tractability. On the other hand, it would be of particular interest to extend the result to nonlinear equations with the iterative methods, or the time-stepping analysis for the parabolic problem, which will be addressed in the forthcoming papers.


\bibliography{references}

\begin{thebibliography}{10}

\bibitem{barron_2}
F.~Bach.
\newblock Breaking the curse of dimensionality with convex neural networks.
\newblock {\em Journal of Machine Learning Research}, 18(19):1--53, 2017.

\bibitem{multi_1}
Y.~Bar-Sinai, S.~Hoyer, J.~Hickey, and M.~P. Brenner.
\newblock Learning data-driven discretizations for partial differential
  equations.
\newblock {\em Proceedings of the National Academy of Sciences},
  116(31):15344--15349, 2019.

\bibitem{barron_1}
A.~Barron.
\newblock Universal approximation bounds for superpositions of a sigmoidal
  function.
\newblock {\em IEEE Transactions on Information Theory}, 39(3):930--945, 1993.

\bibitem{Rade_3}
P.~L. Bartlett and S.~Mendelson.
\newblock Rademacher and {G}aussian complexities: risk bounds and structural
  results.
\newblock {\em J. Mach. Learn. Res.}, 3(Spec. Issue Comput. Learn.
  Theory):463--482, 2002.

\bibitem{gen_err_1}
J.~Berner, P.~Grohs, and A.~Jentzen.
\newblock Analysis of the generalization error: empirical risk minimization
  over deep artificial neural networks overcomes the curse of dimensionality in
  the numerical approximation of {B}lack-{S}choles partial differential
  equations.
\newblock {\em SIAM J. Math. Data Sci.}, 2(3):631--657, 2020.

\bibitem{misc_1}
A.~Biswas, J.~Tian, and S.~Ulusoy.
\newblock Error estimates for deep learning methods in fluid dynamics.
\newblock {\em Numerische Mathematik}, 151(3):753--777, Jul 2022.

\bibitem{BSFEM}
S.~C. Brenner and L.~R. Scott.
\newblock {\em The mathematical theory of finite element methods}, volume~15 of
  {\em Texts in Applied Mathematics}.
\newblock Springer, New York, third edition, 2008.

\bibitem{CKH2022}
J.~Choi, N.~Kim, and Y.~Hong.
\newblock Unsupervised legendre-galerkin neural network for stiff partial
  differential equations.
\newblock {\em arXiv:2207.10241 [cs.LG]}, 2022.

\bibitem{CHKP2020}
B.~Chudomelka, Y.~Hong, H.~Kim, and J.~Park.
\newblock Deep neural network for solving differential equations motivated by
  legendre-galerkin approximation.
\newblock {\em arXiv:2010.12975 [math.NA]}, 2020.

\bibitem{FEM_Intro}
P.~G. Ciarlet.
\newblock {\em The finite element method for elliptic problems}, volume~40 of
  {\em Classics in Applied Mathematics}.
\newblock Society for Industrial and Applied Mathematics (SIAM), Philadelphia,
  PA, 2002.
\newblock Reprint of the 1978 original [North-Holland, Amsterdam; MR0520174 (58
  \#25001)].

\bibitem{UA_1}
G.~Cybenko.
\newblock Approximation by superpositions of a sigmoidal function.
\newblock {\em Math. Control Signals Systems}, 2(4):303--314, 1989.

\bibitem{misc_3}
T.~De~Ryck, A.~D. Jagtap, and S.~Mishra.
\newblock Error estimates for physics informed neural networks approximating
  the navier-stokes equations, 2022.

\bibitem{bdd_para}
C.~Duan, Y.~Jiao, Y.~Lai, X.~Lu, Q.~Quan, and J.~Z. Yang.
\newblock Analysis of deep ritz methods for laplace equations with dirichlet
  boundary conditions, 2021.

\bibitem{rade_upper_2}
W.~E, C.~Ma, and L.~Wu.
\newblock The {B}arron space and the flow-induced function spaces for neural
  network models.
\newblock {\em Constr. Approx.}, 55(1):369--406, 2022.

\bibitem{DRM}
W.~E and B.~Yu.
\newblock The deep {R}itz method: a deep learning-based numerical algorithm for
  solving variational problems.
\newblock {\em Commun. Math. Stat.}, 6(1):1--12, 2018.

\bibitem{evans}
L.~C. Evans.
\newblock {\em Partial differential equations}, volume~19 of {\em Graduate
  Studies in Mathematics}.
\newblock American Mathematical Society, Providence, RI, second edition, 2010.

\bibitem{GT}
D.~Gilbarg and N.~S. Trudinger.
\newblock {\em Elliptic partial differential equations of second order}, volume
  224 of {\em Grundlehren der mathematischen Wissenschaften [Fundamental
  Principles of Mathematical Sciences]}.
\newblock Springer-Verlag, Berlin, second edition, 1983.

\bibitem{Rade_1}
G.~Gnecco and M.~Sanguineti.
\newblock Approximation error bounds via {R}ademacher's complexity.
\newblock {\em Appl. Math. Sci. (Ruse)}, 2(1-4):153--176, 2008.

\bibitem{spectral_2}
D.~Gottlieb and S.~A. Orszag.
\newblock {\em Numerical analysis of spectral methods: theory and
  applications}.
\newblock CBMS-NSF Regional Conference Series in Applied Mathematics, No. 26.
  Society for Industrial and Applied Mathematics, Philadelphia, Pa., 1977.

\bibitem{approx_1}
P.~Grohs, F.~Hornung, A.~Jentzen, and P.~von Wurstemberger.
\newblock A proof that artificial neural networks overcome the curse of
  dimensionality in the numerical approximation of black-scholes partial
  differential equations, 2018.

\bibitem{rade_upper_4}
Q.~Hong, J.~W. Siegel, and J.~Xu.
\newblock A priori analysis of stable neural network solutions to numerical
  pdes, 2021.

\bibitem{UA_2}
K.~Hornik.
\newblock Approximation capabilities of multilayer feedforward networks.
\newblock {\em Neural Networks}, 4(2):251--257, 1991.

\bibitem{approx_2}
M.~Hutzenthaler, A.~Jentzen, T.~Kruse, and T.~A. Nguyen.
\newblock A proof that rectified deep neural networks overcome the curse of
  dimensionality in the numerical approximation of semilinear heat equations.
\newblock {\em SN Partial Differential Equations and Applications}, 1(2):10,
  Apr 2020.

\bibitem{PINN_5}
G.~E. Karniadakis, I.~G. Kevrekidis, L.~Lu, P.~Perdikaris, S.~Wang, and
  L.~Yang.
\newblock Physics-informed machine learning.
\newblock {\em Nature Reviews Physics}, 3(6), 5 2021.

\bibitem{VPINN}
E.~Kharazmi, Z.~Zhang, and G.~E.~M. Karniadakis.
\newblock {$hp$}-{VPINN}s: variational physics-informed neural networks with
  domain decomposition.
\newblock {\em Comput. Methods Appl. Mech. Engrg.}, 374:Paper No. 113547, 25,
  2021.

\bibitem{terry_lion}
P.~Kidger and T.~Lyons.
\newblock {Universal Approximation with Deep Narrow Networks}.
\newblock In J.~Abernethy and S.~Agarwal, editors, {\em Proceedings of Thirty
  Third Conference on Learning Theory}, volume 125 of {\em Proceedings of
  Machine Learning Research}, pages 2306--2327. PMLR, 09--12 Jul 2020.

\bibitem{barron_3}
J.~M. Klusowski and A.~R. Barron.
\newblock Approximation by combinations of relu and squared relu ridge
  functions with $\ell^1$ and $\ell^0$ controls.
\newblock {\em IEEE Transactions on Information Theory}, 64(12):7649--7656,
  2018.

\bibitem{comv}
A.~Krizhevsky, I.~Sutskever, and G.~E. Hinton.
\newblock Imagenet classification with deep convolutional neural networks.
\newblock In F.~Pereira, C.~Burges, L.~Bottou, and K.~Weinberger, editors, {\em
  Advances in Neural Information Processing Systems}, volume~25. Curran
  Associates, Inc., 2012.

\bibitem{res_min_1}
I.~Lagaris, A.~Likas, and D.~Fotiadis.
\newblock Artificial neural networks for solving ordinary and partial
  differential equations.
\newblock {\em IEEE Transactions on Neural Networks}, 9(5):987--1000, 1998.

\bibitem{CSS}
B.~M. Lake, R.~Salakhutdinov, and J.~B. Tenenbaum.
\newblock Human-level concept learning through probabilistic program induction.
\newblock {\em Science}, 350(6266):1332--1338, 2015.

\bibitem{DLN}
Y.~LeCun, Y.~Bengio, and G.~Hinton.
\newblock Deep learning.
\newblock {\em nature}, 521(7553):436, 2015.

\bibitem{FNO}
Z.~Li, N.~Kovachki, K.~Azizzadenesheli, B.~Liu, K.~Bhattacharya, A.~Stuart, and
  A.~Anandkumar.
\newblock Fourier neural operator for parametric partial differential
  equations, 2020.

\bibitem{optim}
D.~C. Liu and J.~Nocedal.
\newblock On the limited memory bfgs method for large scale optimization.
\newblock {\em Mathematical Programming}, 45(1):503--528, Aug 1989.

\bibitem{PINN_2}
L.~Lu, M.~Dao, P.~Kumar, U.~Ramamurty, G.~E. Karniadakis, and S.~Suresh.
\newblock Extraction of mechanical properties of materials through deep
  learning from instrumented indentation.
\newblock {\em Proceedings of the National Academy of Sciences},
  117(13):7052--7062, 2020.

\bibitem{DON}
L.~Lu, P.~Jin, G.~Pang, Z.~Zhang, and G.~E. Karniadakis.
\newblock Learning nonlinear operators via deeponet based on the universal
  approximation theorem of operators.
\newblock {\em Nature machine intelligence}, 3(3):218--229, 2021.

\bibitem{PINN_1}
L.~Lu, X.~Meng, Z.~Mao, and G.~E. Karniadakis.
\newblock Deep{XDE}: a deep learning library for solving differential
  equations.
\newblock {\em SIAM Rev.}, 63(1):208--228, 2021.

\bibitem{rade_upper_3}
Y.~Lu, J.~Lu, and M.~Wang.
\newblock A priori generalization analysis of the deep ritz method for solving
  high dimensional elliptic partial differential equations.
\newblock In M.~Belkin and S.~Kpotufe, editors, {\em Proceedings of Thirty
  Fourth Conference on Learning Theory}, volume 134 of {\em Proceedings of
  Machine Learning Research}, pages 3196--3241. PMLR, 15--19 Aug 2021.

\bibitem{PINN_anal_3}
T.~Luo and H.~Yang.
\newblock Two-layer neural networks for partial differential equations:
  Optimization and generalization theory, 2020.

\bibitem{PINN_4}
X.~Meng, Z.~Li, D.~Zhang, and G.~E. Karniadakis.
\newblock P{PINN}: parareal physics-informed neural network for time-dependent
  {PDE}s.
\newblock {\em Comput. Methods Appl. Mech. Engrg.}, 370:113250, 16, 2020.

\bibitem{PINN_anal_2}
S.~Mishra and R.~Molinaro.
\newblock Estimates on the generalization error of physics-informed neural
  networks for approximating a class of inverse problems for {PDE}s.
\newblock {\em IMA J. Numer. Anal.}, 42(2):981--1022, 2022.

\bibitem{misc_2}
J.~M{\"u}ller and M.~Zeinhofer.
\newblock Deep ritz revisited.
\newblock In {\em ICLR 2020 Workshop on Integration of Deep Neural Models and
  Differential Equations}, 2019.

\bibitem{boundp}
J.~Müller and M.~Zeinhofer.
\newblock Error estimates for the deep ritz method with boundary penalty, 2021.

\bibitem{rade_upper_1}
B.~Neyshabur, Z.~Li, S.~Bhojanapalli, Y.~LeCun, and N.~Srebro.
\newblock Towards understanding the role of over-parametrization in
  generalization of neural networks.
\newblock {\em CoRR}, abs/1805.12076, 2018.

\bibitem{notation}
P.~Petersen and F.~Voigtlaender.
\newblock Optimal approximation of piecewise smooth functions using deep relu
  neural networks.
\newblock {\em Neural Networks}, 108:296--330, 2018.

\bibitem{UA_3}
A.~Pinkus.
\newblock Approximation theory of the {MLP} model in neural networks.
\newblock In {\em Acta numerica, 1999}, volume~8 of {\em Acta Numer.}, pages
  143--195. Cambridge Univ. Press, Cambridge, 1999.

\bibitem{TSA}
B.~Qian, Y.~Xiao, Z.~Zheng, M.~Zhou, W.~Zhuang, S.~Li, and Q.~Ma.
\newblock Dynamic multi-scale convolutional neural network for time series
  classification.
\newblock {\em IEEE Access}, 8:109732--109746, 2020.

\bibitem{PINN}
M.~Raissi, P.~Perdikaris, and G.~E. Karniadakis.
\newblock Physics-informed neural networks: a deep learning framework for
  solving forward and inverse problems involving nonlinear partial differential
  equations.
\newblock {\em J. Comput. Phys.}, 378:686--707, 2019.

\bibitem{Rade_4}
S.~Shalev-Shwartz and S.~Ben-David.
\newblock {\em Understanding Machine Learning - From Theory to Algorithms.}
\newblock Cambridge University Press, 2014.

\bibitem{SM_new}
J.~Shen.
\newblock Efficient spectral-{G}alerkin method. {I}. {D}irect solvers of
  second- and fourth-order equations using {L}egendre polynomials.
\newblock {\em SIAM J. Sci. Comput.}, 15(6):1489--1505, 1994.

\bibitem{spectral_method}
J.~Shen, T.~Tang, and L.-L. Wang.
\newblock {\em Spectral methods}, volume~41 of {\em Springer Series in
  Computational Mathematics}.
\newblock Springer, Heidelberg, 2011.
\newblock Algorithms, analysis and applications.

\bibitem{PINN_anal_1}
Y.~Shin, J.~Darbon, and G.~E. Karniadakis.
\newblock On the convergence of physics informed neural networks for linear
  second-order elliptic and parabolic type {PDE}s.
\newblock {\em Commun. Comput. Phys.}, 28(5):2042--2074, 2020.

\bibitem{SZK2020}
Y.~Shin, Z.~Zhang, and G.~E. Karniadakis.
\newblock Error estimates of residual minimization using neural networks for
  linear pdes.
\newblock {\em arXiv:2010.08019 [math.NA]}, 2020.

\bibitem{PINN_3}
K.~Shukla, P.~C.~D. Leoni, J.~L. Blackshire, D.~Sparkman, and G.~E.
  Karniadakis.
\newblock Physics-informed neural network for ultrasound nondestructive
  quantification of surface breaking cracks.
\newblock {\em CoRR}, abs/2005.03596, 2020.

\bibitem{barron_4}
J.~W. Siegel and J.~Xu.
\newblock Approximation rates for neural networks with general activation
  functions.
\newblock {\em Neural Networks}, 128:313--321, 2020.

\bibitem{barron_5}
J.~W. Siegel and J.~Xu.
\newblock High-order approximation rates for shallow neural networks with
  cosine and reluk activation functions.
\newblock {\em Applied and Computational Harmonic Analysis}, 58:1--26, 2022.

\bibitem{DGM}
J.~Sirignano and K.~Spiliopoulos.
\newblock D{GM}: a deep learning algorithm for solving partial differential
  equations.
\newblock {\em J. Comput. Phys.}, 375:1339--1364, 2018.

\bibitem{Rade_2}
M.~J. Wainwright.
\newblock {\em High-dimensional statistics}, volume~48 of {\em Cambridge Series
  in Statistical and Probabilistic Mathematics}.
\newblock Cambridge University Press, Cambridge, 2019.
\newblock A non-asymptotic viewpoint.

\bibitem{PINN_6}
S.~Wang, S.~Sankaran, and P.~Perdikaris.
\newblock Respecting causality is all you need for training physics-informed
  neural networks, 2022.

\bibitem{UAT_rate_1}
D.~Yarotsky.
\newblock Error bounds for approximations with deep relu networks.
\newblock {\em Neural Networks}, 94:103--114, 2017.

\bibitem{UAT_rate_2}
D.~Yarotsky.
\newblock Optimal approximation of continuous functions by very deep relu
  networks.
\newblock In S.~Bubeck, V.~Perchet, and P.~Rigollet, editors, {\em Proceedings
  of the 31st Conference On Learning Theory}, volume~75 of {\em Proceedings of
  Machine Learning Research}, pages 639--649. PMLR, 06--09 Jul 2018.

\bibitem{multi_2}
J.~Zhuang, D.~Kochkov, Y.~Bar-Sinai, M.~P. Brenner, and S.~Hoyer.
\newblock Learned discretizations for passive scalar advection in a
  two-dimensional turbulent flow.
\newblock {\em Phys. Rev. Fluids}, 6:064605, Jun 2021.

\end{thebibliography}
\bibliographystyle{abbrv}


\end{document}